\DeclareMathAlphabet{\mathpzc}{OT1}{pzc}{m}{it}
\DeclareMathOperator{\sech}{sech}
\newtheorem{propo}{Proposition}[section]
\newtheorem{lemma}[propo]{Lemma}
\newtheorem{hyp}{Hypothesis}
\newcommand{\reals}{{\mathds R}}
\newcommand{\eqnsection}{\renewcommand{\theequation}{\thesection.\arabic{equation}}
      \makeatletter \csname @addtoreset\endcsname{equation}{section}\makeatother}
\def\l|{\left|\left|}
\def\r|{\right|\right|}
\def\R{\mathbb R}
\def\P{\mathbb P}
\def\E{\mathbb E}
\def\ve{\varepsilon}
\def\diag{\textrm {diag}} 
\def\de{{\rm d}}
\def\reals{{\mathds R}}
\def\us{\underline{\sigma}}
\def\R{\mbox{\tiny\rm R}}
 \def\us{{\underline\sigma}}
\title{High Temperature Asymptotics of Orthogonal Mean-Field Spin Glasses}
\author{Bhaswar B. Bhattacharya}
\address{Department of Statistics, Stanford University, California, USA,  \tt{bhaswar@stanford.edu}}
\author{Subhabrata Sen} 
\address{Department of Statistics, Stanford University, California, \tt{ssen90@stanford.edu}} 
\date{\today}
\begin{document}

\begin{abstract}
 %
We evaluate the high temperature limit of the free energy of spin glasses on the hypercube with Hamiltonian $H_N(\us) = \us^T J \us$, where the coupling matrix $J$ is drawn from certain symmetric orthogonally invariant ensembles. Our derivation relates the \textit{annealed free energy} of these models to a spherical integral, and expresses the limit of the free energy in terms of the limiting spectral measure of the coupling matrix $J$. As an application, we derive the limiting free energy of the Random Orthogonal Model (ROM) at high temperatures, which confirms non-rigorous calculations of Marinari et al. \cite{marinari_parisi}. Our methods also apply to other well-known models of disordered systems, including the SK and  Gaussian Hopfield models. 
\end{abstract}

\subjclass[2010]{60F10, 15B10, 82B44}
\keywords{Large deviations, Random orthogonal matrices, Spherical integrals, Spin glasses.}

\maketitle

\section{Introduction}
\label{sec:intro}


Consider a (random) function on the hypercube $H_N: S_N = \{-1, +1\}^N \to \reals$ defined as  
\begin{align}
H_N(\us)= \us^{T} J \us
\label{orthogonal} 
\end{align}
with coupling matrix $J=ODO^{T}$, where $O$ is Haar distributed over the orthogonal group $O(N)$ and $D=\diag (d_1, \cdots, d_N)$ is a diagonal matrix independent of $O$.  This defines a probability distribution over $S_N$ as follows: for $\tau \in S_N$ and $\beta \geq 0$,
\begin{equation}
\P(\vec\sigma= \tau)=\frac{1}{2^N}\cdot\frac{e^{\beta H_N(\tau)}}{Z_N(\beta, O, D)},
\label{exp}
\end{equation}
where the {\it partition function} $Z_N(\beta, O, D) = \frac{1}{2^N}\sum_{\us \in  S_N} \exp(\beta H_N(\us))$. These distributions arise frequently in the analysis of disordered systems in statistical physics. In this context,  $H_N(\us)$ describes the energy of the configuration $\us$, and is usually referred to as the Hamiltonian of the system. The parameter $\beta$ denotes the inverse temperature, so the {\it high temperature} regime corresponds to small values of $\beta$. We seek to evaluate the large $N$ limit of the {\it free energy} 
\begin{equation}
\Phi_N(\beta, O, D)=\frac{1}{N}\log Z_N(\beta, O, D)
\label{phi}
\end{equation}
in these models. 

Models of the form (\ref{exp})  will be referred to as {\it orthogonal mean-field spin glasses}--- they include many well-known physical models of disordered systems:

\begin{enumerate}[(a)]

\item {\it Sherrington-Kirkpatrick (SK) Model}: In the SK model of spin glasses the coupling matrix $J=\frac{1}{\sqrt N}W$, where $W$ is a symmetric matrix drawn from the {\it{Gaussian Orthogonal Ensemble}}.  
It is well known that $W=O D O^{T}$,  where $O\sim O(N)$ is Haar distributed and $D=\diag (d_1, d_2, \cdots, d_N)$ is a diagonal matrix independent of $O$, such that the empirical measure $\frac{1}{N}\sum_{i=1}^N \delta_{d_i}$ converges to the semi-circle law \cite{agz}. 
The limit of the free energy for all temperatures was conjectured by Parisi using deep ideas of replica symmetry breaking, and was rigorously established by Talagrand \cite{talagrand} (refer to \cite{panchenko} for an introduction to this subject).  Carmona and Hu \cite{carmona_hu} (see also Chatterjee \cite{chatterjee}) proved that the Parisi formula continues to hold even if the entries of the coupling matrix $J=((J_{ij}))$ are independent mean zero random variables, subject to some conditions on the higher moments. 

\item {\it Random Orthogonal Model (ROM)}: Marinari et al. \cite{marinari_parisi} introduced the ROM to model a deterministic system which exhibits glassy behavior. In this model the coupling matrix $J= ODO^T$, where $D= \diag (d_1, \cdots, d_N)$ is a deterministic sequence of $\{ \pm 1\}$  such that the empirical measure 
\begin{equation}
\mu_N(D)=\frac{1}{N}\sum_{i=1}^N \delta_{d_i}\dto  p\delta_{1} + (1-p) \delta_{-1},
\label{rom}
\end{equation}
for some $p \in (0, 1)$. The case $p=1/2$ has received a lot of attention in the physics literature (see \cite{bernasconi,energy_landscape,marinari_parisi_autocorrelation} and the references therein). The limiting free energy of this model is not known rigorously even in the high temperature regime. The coupling matrix $J$ has dependent entries and non-rigorous calculations based on the replica method 
%
predict different behavior compared to the SK model \cite{cherrier,marinari_parisi_autocorrelation,marinari_parisi}. This suggests that comparison/universality techniques like \cite{carmona_hu,chatterjee} cannot be directly used to compute the free energy.

\item {\it Gaussian Hopfield Model}: Cherrier et al. \cite{cherrier} considered the Gaussian Hopfield Model where the coupling matrix $J= \frac{1}{p} XX^T$, where $X=((X_{ij}))$ is a $ N \times p$ matrix with i.i.d. $\cN(0, 1)$. The coupling matrix of the usual Hopfield model has the same structure, but the matrix $X$ consists of i.i.d. Rademacher $\{\pm 1\}$ random variables.
Bovier et al. \cite{bovier} studied the Gaussian Hopfield model with 2-patterns and this ``simple" case already shows highly complicated behavior. It is generally believed that a Hopfield model with $p$ parameters where $p \sim \lambda N$ is significantly more complicated compared to the one with a finite number of patterns. 
\end{enumerate}


This paper gives a general method for computing the limit of the free energy in orthogonal mean-field spin glass models at sufficiently high temperatures (see Theorem \ref{main}). Exploiting a connection with spherical integrals \cite{hchandra,guionnet_maida} and using techniques from large deviations and random matrix theory, we rigorously justify certain heuristics employed in the traditional analyses of these systems. In particular, we derive: 

\begin{itemize}
\item[1.] the limiting free energy of the SK model in the entire high temperature phase (Corollary \ref{sk}), re-deriving the classical result of Aizenman et. al.  \cite{alr},

\item[2.] the limiting free energy of ROM for $\beta$ sufficiently small (Corollary \ref{cor:rom}), which verifies predictions of Marinari et al. \cite{marinari_parisi}, and

\item[3.] the limiting free energy of the Gaussian Hopfield model with $p/N \to \lambda \in (1, \infty)$ for high temperatures, confirming non-rigorous calculations of Cherrier et al. \cite{cherrier}. We remark that our techniques should also apply to the case $ \lambda \in (0,1)$ but we restrict ourselves to the first case for clarity. 

\end{itemize}

\subsection{Main Results} To state our main results we need to introduce some notations. The Haar measure on the orthogonal group $O(N)$ will be denoted by $\mathrm  d O$, and the expectation of a function $f$ will be denoted by $\E_0f(O):=\int_{O}f(O)\mathrm  d O$.

For any probability measure $\mu$, denote by $\textrm{supp}$ the support of $\mu$. We will always consider probability measures with bounded support so that $\textrm{supp}(\mu) \subseteq [\lambda_{\min}, \lambda_{\max}]$. To describe our results we need to introduce the  Hilbert transform and the $R$-transform of a probability measure with bounded support: 

\begin{defn}
The Hilbert transform $H_{\mu}$ of a measure $\mu$ is $H_{\mu}: \mathbb{R}\backslash \textrm{supp}(\mu) \to \mathbb{R}$ 
\begin{align}
z &\mapsto \int \frac{1}{z-\lambda} \de\mu(\lambda). 
\label{hilbert_transform}
\end{align}
It is easy to show that $H_{\mu}$ is a bijective map from $\reals\backslash \textrm{supp}(\mu)$ to $(H_{\min}, H_{\max})\backslash\{0\}$ (see \cite{guionnet_maida}), where
\begin{align}
\label{eq:hmax}
H_{\max} = \lim_{ z \downarrow \lambda_{\max}} H_{\mu}(z) \,\,\,\,\,\,\,\,\, H_{\min}= \lim_{ z \uparrow \lambda_{\min}} H_{\mu}(z).  
\end{align}

Thus, setting $x_{\min} = \lambda_{\min} - 1/H_{\min}$, $x_{\max}= \lambda_{\max}- 1/H_{\max}$, and $m= \int \lambda \de \mu_{\lambda}$,  for $z \in H_{\mu}(\mathbb{R}\backslash {\textrm{supp}(\mu)})$, define the $R$-transform $R_{\mu}: (H_{\min}, H_{\max})\backslash\{0\} \to (x_{\min}, x_{\max}) \backslash\{m\} $ as 
\begin{align}
H_{\mu}\Big( R_{\mu}(z) + \frac{1}{z} \Big) = z. \label{R_transform} 
\end{align}
It is easy to see that $R_{\mu}$ is bijective and we denote its inverse by $Q_{\mu}$. Let 
\begin{equation}
I_{\mu}(\beta) = \frac{1}{2} \int_{0}^{2\beta} R_{\mu}(v) \de v.
\label{Ibeta}
\end{equation} 
Finally, for any $\beta >0$, define 
\begin{align}\label{interval}
U_L = 2\beta ( 1 - \tanh \beta( x_{\max} - x_{\min}) ),   \quad U_R = 2 \beta ( 1 + \tanh \beta (x_{\max} - x_{\min}) ).
\end{align}
\end{defn}


We will restrict ourselves to models where the sequence of random empirical measures $\mu_N(D)=\frac{1}{N}\sum_{i=1}^N \delta_{d_i}$ corresponding to the matrix $D$ in \eqref{exp} satisfy certain ``rigidity" properties. This allows us to neglect the fluctuations of the spectrum in the calculation of the free energy limit. We impose the following property on the law of the matrix $D$. 

\begin{hyp}
\label{hyp:dist} 
Let $D=\diag(d_1, d_2, \ldots, d_N)$ be a (random) diagonal matrix with  empirical measure $\mu_N(D)=\frac{1}{N}\sum_{i=1}^N \delta_{d_i}$. Assume that 
\begin{enumerate}
\item  there exists a sequence of numbers $M_N = o(\sqrt{N})$ such that 
$$\lim_{N\rightarrow \infty}\P( \|D \|_{\infty} > M_N) =0,$$
where $\| D \|_{\infty} = \max_{1 \leq i \leq N} |d_i|$;
\item there exists a deterministic measure $\nu_N$ supported on $N$ points in $\reals$ such that for any $c>0$, 
\begin{equation}
\lim_{N\rightarrow \infty}\P\left(W_2 (\mu_N(D), \nu_N) > \frac{c}{\sqrt{N}}\right) \to 0
\label{W2}
\end{equation}
where $W_2(\cdot, \cdot)$ is the 2-Wasserstein distance between two probability measures. 
\end{enumerate}
\end{hyp}

In most of our applications, it suffices to take $\nu_N=\frac{1}{N}\sum_{i=1}^N\delta_{\E(d_i)}$. It can be easily checked that all our results continue to hold with any sequence of probability measures $\nu_N$ satisfying Hypothesis \ref{hyp:dist}. However, we state our results with $\nu_N= \sum_{i=1}^{N} \delta_{\E(d_i)}$ for clarity.      We define,  for any deterministic diagonal matrix $\Lambda = \textrm{diag}(\lambda_1, \cdots, \lambda_N)$,
\begin{align}
\Gamma_N(\beta, \Lambda) = \frac{1}{N} \E_0( \log Z_N(\beta, O, \Lambda)).
\label{gamma}
\end{align} 
Note that due to the invariance of the Haar measure on $O(N)$, $\Gamma_N(\beta, \Lambda)$ is only a function of the empirical distribution $\mu_N(\Lambda)=\frac{1}{N}\sum_{i=1}^N \delta_{\lambda_i}$. The following proposition establishes that we may neglect the fluctuations of the spectrum for the calculation of the free energy. 

\begin{ppn}
\label{ppn:first_step}
Consider an orthogonal mean field spin glass model (\ref{exp}) with a (random) diagonal matrix $D=\diag(d_1, d_2, \ldots, d_N)$. If the sequence of measures $\mu_N(D)=\frac{1}{N}\sum_{i=1}^N \delta_{d_i}$ satisfies  Hypothesis \ref{hyp:dist}, then
\begin{align}
|\Phi_N(\beta, O, D) - \Gamma_N(\beta , \E(D) ) |\pto 0.
\end{align}
\end{ppn}

The proof of Proposition \ref{ppn:first_step} is outlined in Section \ref{pfppn}. Given this result, to compute the limit of the free energy $\lim_{N\rightarrow \infty}\Phi_N(\beta, O, D)$ it suffices to compute the limit of $\Gamma_N(\beta, \E(D))$. 

A crucial ingredient in the analysis of the asymptotics of $\Gamma_N(\beta, \Lambda)$  is a connection with a spherical integral. 
Guionnet and Maida \cite{guionnet_maida} derived the asymptotics of these integrals in terms of the $R$-transform of the limit $\mu$ of the empirical measure $\mu_N(\Lambda)=\frac{1}{N}\sum_{i=1}^N \delta_{\lambda_i}$ (refer to Section \ref{sec:spherical} for details). They assume the following conditions on the measure $\mu_N(\Lambda)$:

\begin{hyp}
\label{hyp:exp_dist}
For a deterministic diagonal matrix $\Lambda=\diag(\lambda_1, \lambda_2, \ldots, \lambda_N)$, denote by $\mu_N(\Lambda)=\frac{1}{N}\sum_{i=1}^N \delta_{\lambda_i}$ the empirical measure of $\Lambda$. Assume that
\begin{enumerate}
\item the sequence of measures $\{\mu_N(\Lambda)\}_{N\geq 1}$ converges weakly to a compactly supported measure $\mu$, and

\item $\lambda_{\min}(\Lambda) := \min_{1\leq i \leq N} \lambda_i$ and $\lambda_{\max}(\Lambda) := \max_{1\leq i\leq N} \lambda_i$ converge to $\lambda_{\min}$ and $\lambda_{\max}$ which are finite. 
\end{enumerate}
\end{hyp}

We will also assume Hypothesis \ref{hyp:exp_dist} to determine the limit of the partition function $Z_N(\beta, O, D)$. We have the following general result for the limiting free energy at high temperature. 

 \begin{thm}
\label{main}
Consider an orthogonal mean field spin glass model (\ref{exp}) with a (random) diagonal matrix $D=\diag(d_1, d_2, \ldots, d_N)$. Assume that 
\begin{enumerate}[(a)]

\item the sequence of (random) measures $\mu_N(D)=\frac{1}{N}\sum_{i=1}^N \delta_{d_i}$ satisfies Hypothesis \ref{hyp:dist}, and 

\item the sequence of deterministic measures $\mu_N(\E(D))=\frac{1}{N}\sum_{i=1}^N \delta_{\E(d_i)}\dto \mu$ and satisfies Hypothesis \ref{hyp:exp_dist}. 


\item $\limsup_{\beta \to 0}  \beta^2 \sup_{\beta_0 \in [ U_L, U_R] } | R'(\beta_0)| < \frac{1}{4}$, where $U_L$ and $U_R$ are as defined in~\eqref{interval}. 
\end{enumerate}

\noindent Then for $\beta$ sufficiently small (depending on $\mu$),
\begin{align}
\Phi_N(\beta, O, D) \pto I_{\mu}(\beta),
\label{phiNlimit}
\end{align}
with $I_{\mu}$  defined in (\ref{Ibeta}).
\end{thm}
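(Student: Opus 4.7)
The plan is to combine a first-moment (annealed) upper bound with a second-moment lower bound, anchored on the spherical integral asymptotics of Guionnet--Maida. By Proposition~\ref{ppn:first_step} it suffices to prove $\Gamma_N(\beta,\E(D))\to I_\mu(\beta)$; write $\Lambda_N=\E(D)$. For the upper bound, Jensen's inequality gives $\Gamma_N(\beta,\Lambda_N)\le \tfrac{1}{N}\log\E_0 Z_N(\beta,O,\Lambda_N)$; interchanging the $\sigma$-sum with $\E_0$ and invoking Haar invariance on $O(N)$, $\E_0 Z_N(\beta,O,\Lambda_N) = \E_0\exp(\beta\,\sigma^T O\Lambda_N O^T\sigma)$ for any fixed $\sigma\in S_N$, which is the rank-one spherical integral in $\Lambda_N$ at parameter $\beta$. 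Applying Guionnet--Maida under Hypothesis~\ref{hyp:exp_dist} yields $\tfrac{1}{N}\log\E_0 Z_N\to I_\mu(\beta)$.

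For the matching lower bound, compute
\begin{align*}
\E_0 Z_N^2 = \frac{1}{4^N}\sum_{\sigma,\tau\in S_N}\E_0\exp\bigl(\beta\,\mathrm{tr}\bigl(J(\sigma\sigma^T+\tau\tau^T)\bigr)\bigr).
\end{align*}
The rank-two matrix $\sigma\sigma^T+\tau\tau^T$ has nonzero eigenvalues $N(1\pm R_{\sigma\tau})$ where $R_{\sigma\tau}=\tfrac{1}{N}\sigma\cdot\tau$. Diagonalising on the span of $\{\sigma,\tau\}$ and then using Haar invariance to re-randomise the two eigendirections recasts the inner expectation as a rank-two spherical integral at parameters $\beta(1\pm R_{\sigma\tau})$, whose leading exponent (by the rank-two extension of Guionnet--Maida) is $I_\mu(\beta(1+R))+I_\mu(\beta(1-R))$. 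Grouping the double sum by overlap and using the standard entropy estimate $4^{-N}\#\{(\sigma,\tau):R_{\sigma\tau}=R\}=e^{-NR^2/2+o(N)}$ gives
\begin{align*}
\limsup_{N\to\infty}\tfrac{1}{N}\log\E_0 Z_N^2 \;\le\; \sup_{R\in[-1,1]}\Bigl\{-\tfrac{R^2}{2}+I_\mu(\beta(1+R))+I_\mu(\beta(1-R))\Bigr\}.
\end{align*}
Since $I_\mu'(x)=R_\mu(2x)$, the second derivative of the bracket equals $-1+2\beta^2[R_\mu'(2\beta(1+R))+R_\mu'(2\beta(1-R))]$. Hypothesis~(c), which bounds $\beta^2|R_\mu'|<1/4$ on the interval $[U_L,U_R]$ (chosen precisely to cover the relevant range of $2\beta(1\pm R)$), forces the bracket to be strictly concave, so the supremum is attained uniquely at $R=0$ and equals $2I_\mu(\beta)$. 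Hence $\E_0 Z_N^2=(1+o(1))(\E_0 Z_N)^2$; Paley--Zygmund then gives $\P(Z_N\ge\tfrac{1}{2}\E_0 Z_N)\to 1$, which, combined with the deterministic bound $\tfrac{1}{N}\log Z_N\ge -\beta\|\Lambda_N\|_\infty$ (uniformly bounded by Hypothesis~\ref{hyp:exp_dist}(ii) to convert convergence in probability into convergence of expectations), upgrades to $\Gamma_N\ge I_\mu(\beta)-o(1)$.

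The principal technical obstacle will be the second-moment step: the cited Guionnet--Maida theorem is stated for rank-one spherical integrals, so a rank-two analogue must be established or imported, ideally with estimates uniform in the overlap $R\in[-1,1]$. One must simultaneously rule out secondary maxima of the effective action at overlaps far from zero, especially near $R=\pm 1$, where the rank-two asymptotic is delicate because the two ``frame vectors'' cease to be linearly independent in the limit. This is precisely where the interval $[U_L,U_R]$ and hypothesis~(c) earn their keep: they encode the high-temperature regime in which the replica-symmetric saddle $R=0$ dominates globally and the annealed free energy coincides with the quenched one.
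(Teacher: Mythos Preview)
Your overall architecture—Jensen for the upper bound, second moment plus concentration for the lower—matches the paper's. The execution of the second-moment step differs and contains a real gap.

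Your claim that $[U_L,U_R]$ ``is chosen precisely to cover the relevant range of $2\beta(1\pm R)$'' is incorrect: by definition $U_L=2\beta(1-\tanh\beta(x_{\max}-x_{\min}))$ and $U_R=2\beta(1+\tanh\beta(x_{\max}-x_{\min}))$, so for small $\beta$ this is a tiny neighborhood of $2\beta$, whereas $2\beta(1\pm R)$ with $R\in[-1,1]$ ranges over all of $[0,4\beta]$. Condition~(c) as stated therefore does not control your second derivative globally in $R$, and the concavity argument fails. The paper avoids this by integrating out the spins \emph{before} any large-deviation step: using $\E_{\sigma,\tau}e^{\lambda\sigma^T\tau}=(\cosh\lambda)^N$ exactly, one obtains $\E_0 Z_N^2=\E_0\exp\bigl(N[\beta(V_1+V_2)+\log\cosh\beta(V_1-V_2)]\bigr)$ with $V_i=(O\Lambda O^T)_{ii}$. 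After a localization lemma that replaces $(V_1,V_2)$ by independent copies, Varadhan's lemma against the LDP for $V_1$ yields a variational problem over $(x,y)\in[x_{\min},x_{\max}]^2$ whose stationarity equations read $x=R_\mu\bigl(2\beta(1+\tanh\beta(x-y))\bigr)$ and symmetrically in $y$. Because $|\tanh\beta(x-y)|\le\tanh\beta(x_{\max}-x_{\min})$ on this box, the argument of $R_\mu$ lands in $[U_L,U_R]$ automatically, and condition~(c) then makes the fixed-point map a contraction. That is the origin of the $\tanh$ in the definition of $U_L,U_R$; your overlap parametrization does not see it. (With the correct entropy below, your variational problem and the paper's are Legendre duals of one another, but the paper's choice of variables is precisely what makes~(c) the right hypothesis.)

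Two smaller issues. (i) The entropy estimate $4^{-N}\#\{R_{\sigma\tau}=R\}=e^{-NR^2/2+o(N)}$ is only the Gaussian approximation; the true rate is $-J(R)$ with $J(R)=\tfrac{1+R}{2}\log(1+R)+\tfrac{1-R}{2}\log(1-R)$. Since $J(R)\ge R^2/2$ this still upper-bounds $\E_0 Z_N^2$, but it changes the variational problem you must analyze. (ii) Matching exponential rates of the first and second moments gives only $\E_0 Z_N^2\le e^{o(N)}(\E_0 Z_N)^2$, so Paley--Zygmund yields $\P\bigl(Z_N\ge\tfrac12\E_0 Z_N\bigr)\ge e^{-o(N)}$, not $\to1$. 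To convert this into $\Gamma_N\ge I_\mu(\beta)-o(1)$ you need the sub-Gaussian concentration of $\tfrac1N\log Z_N(\beta,O,\Lambda_N)$ about its mean (which the paper proves separately via Gromov--Milman on $SO(N)$); the deterministic lower bound $\tfrac1N\log Z_N\ge-\beta\|\Lambda_N\|_\infty$ does not substitute for that step.
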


As a consequence of the above theorem, we obtain the limiting free energy for many well-known models of disordered systems. Most importantly, we derive the limiting free energy of ROM (\ref{rom}) for $\beta$ sufficiently small (Corollary \ref{cor:rom}), which matches the predictions of Marinari et al. \cite{marinari_parisi} obtained by non-rigorous methods. The limiting free energy for the case $p=1/2$ is given in the following corollary. Refer to Proposition \ref{ppn:rom} for the expression for any $p\in (0, 1)$.

\begin{cor}
For the random orthogonal model (ROM) with $p=1/2$, for $\beta$ sufficiently small, 
\begin{align}
\frac{1}{N} \log Z_N(\beta, O, D) \pto \frac{1}{4}\left(\sqrt{16 \beta ^2+1}+\log \left(\frac{\sqrt{16 \beta ^2+1}-1}{8 \beta ^2}\right)-1\right).
\label{betag}
\end{align}
\label{cor:rom}
\end{cor}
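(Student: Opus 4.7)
The plan is to apply Theorem \ref{main} to the limit spectral measure $\mu = \tfrac{1}{2}(\delta_1 + \delta_{-1})$ corresponding to the $p=1/2$ ROM, and then evaluate $I_\mu(\beta)$ in closed form.

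First I would verify the three conditions of Theorem \ref{main}. In the ROM the matrix $D$ is deterministic with entries in $\{\pm 1\}$, so $\|D\|_\infty = 1 = o(\sqrt N)$ and the choice $\nu_N = \mu_N(D)$ makes the $W_2$ bound \eqref{W2} trivial, giving Hypothesis \ref{hyp:dist}. Since $\mathbb{E}(D)=D$, the assumption $p=1/2$ yields weak convergence $\mu_N(\mathbb{E}(D)) \to \mu$ with $\lambda_{\min}(D)=-1$ and $\lambda_{\max}(D)=1$ for $N\ge 2$, so Hypothesis \ref{hyp:exp_dist} holds on the compact support $[-1,1]$. A direct computation yields
\[ H_\mu(z) = \frac{1}{2}\Big(\frac{1}{z-1} + \frac{1}{z+1}\Big) = \frac{z}{z^2-1}, \qquad z \in \mathbb{R}\setminus[-1,1], \]
so $H_{\max}=+\infty$, $H_{\min}=-\infty$, and therefore $x_{\max}=1$, $x_{\min}=-1$. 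Inverting $H_\mu$ by solving the quadratic $zw^2-w-z=0$ on the branch $w>1$ (for $z>0$) gives
\[ R_\mu(z) = \frac{\sqrt{1+4z^2}-1}{2z}, \qquad z>0, \]
which has Taylor expansion $R_\mu(z)=z-z^3+O(z^5)$ near $0$, so $R_\mu'$ is continuous at the origin with $R_\mu'(0)=1$. Since $x_{\max}-x_{\min}=2$, both $U_L$ and $U_R$ in \eqref{interval} tend to $0$ as $\beta\downarrow 0$, and hence $\beta^2\sup_{[U_L,U_R]}|R_\mu'|\to 0 < \tfrac{1}{4}$, verifying condition (c).

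Theorem \ref{main} now delivers $\Phi_N(\beta,O,D) \pto I_\mu(\beta) = \tfrac{1}{2}\int_0^{2\beta} \tfrac{\sqrt{1+4v^2}-1}{2v}\,dv$ for $\beta$ sufficiently small. It remains to evaluate this integral. Writing $\tfrac{\sqrt{1+4v^2}-1}{2v}=\tfrac{\sqrt{1+4v^2}}{2v}-\tfrac{1}{2v}$ and substituting $w=\sqrt{1+4v^2}$ in the first piece (so $w^2-1=4v^2$) reduces it to $\int \tfrac{w^2}{2(w^2-1)}\,dw$, yielding the antiderivative
\[ F(v) = \frac{\sqrt{1+4v^2}}{2} + \frac{1}{4}\log\Big(\frac{\sqrt{1+4v^2}-1}{\sqrt{1+4v^2}+1}\Big) - \frac{1}{2}\log v. \]
The asymptotics $\sqrt{1+4v^2}-1\sim 2v^2$ as $v\downarrow 0$ show that the logarithmic singularities of the last two terms cancel, giving $\lim_{v\downarrow 0}F(v)=\tfrac{1}{2}$. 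Evaluating $F(2\beta)$ and using the identity $(\sqrt{1+16\beta^2}-1)(\sqrt{1+16\beta^2}+1)=16\beta^2$ to absorb $-\tfrac{1}{4}\log(2\beta)$ into $\tfrac{1}{8}\log\big[(\sqrt{1+16\beta^2}-1)/(\sqrt{1+16\beta^2}+1)\big]$ produces exactly the closed form in \eqref{betag}.

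The main obstacle is purely computational---the bookkeeping in the integration and log-identity manipulations. The hypothesis verifications are essentially free because $\|D\|_\infty=1$ deterministically, and condition (c) holds because $R_\mu$ is analytic at the origin.
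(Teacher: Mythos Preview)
Your proposal is correct and follows essentially the same approach as the paper: both verify the hypotheses of Theorem~\ref{main} for the two-point measure (the paper does it for general $p$ in Proposition~\ref{ppn:rom} and then specializes to $m=2p-1=0$), compute $H_\mu$, $R_\mu$, check condition~(c) via $R_\mu'$ near the origin, and then evaluate $I_\mu(\beta)$. You go a little further than the paper by actually carrying out the antiderivative computation that produces the closed form in~\eqref{betag}, which the paper leaves implicit.
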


Using Theorem \ref{main} we can also obtain the limiting free energy of the SK model in the entire high temperature phase (Corollary \ref{sk}), re-deriving the classical result of Aizenman et al.~\cite{alr}. Our calculations also give the limiting free energy for the Gaussian Hopfield model at high temperatures, verifying non-rigorous calculations of Cherrier et al. \cite{cherrier}.


\subsection{Proof Outline and Connections to Spherical Integrals} 
\label{sec:spherical}

Spherical integrals over the orthogonal group $O(N)$ (also known as Harish Chandra-Itzykson-Zuber (HCIZ) integrals \cite{hchandra})  are integrals of the form 
\begin{align}
\int_{O(N)} \exp( N \tr (OD_NO^TE_N)) \mathrm d O,
\label{sphericalint}
\end{align}
where $D_N$ and $E_N$ are $N\times N$ diagonal matrices. HCIZ integrals have been studied due to their connection to matrix models and the enumeration of planar maps (refer \cite{guionnet_zeitouni} and the references therein). Asymptotics of spherical integrals was studied  by Guionnet and Maida \cite{guionnet_maida} in the regime where the rank of $D_N$ is small compared to $N$.  An alternative simpler proof was provided in \cite{sniady}. 

To see the connection of such integrals to mean-field orthogonal spin glass models consider the  {\it annealed free energy} of the model (\ref{exp}): $\phi_N(\beta, \Lambda)=\frac{1}{N}\log \E_0 Z_N(\beta, O, \Lambda)$, where $\Lambda=\diag(\lambda_1, \lambda_2, \ldots, \lambda_N)$ is a deterministic diagonal matrix. Note that
\begin{equation}
Z_N(\beta, O, \Lambda)=\frac{1}{2^N}\sum_{\vec \sigma\in S_N}\exp(\beta \vec \sigma^T O\Lambda O^T \vec \sigma)=\frac{1}{2^N}\sum_{\vec \sigma \in S_N} \exp \left( N\beta \tr\left\{O\Lambda O^{T} \left(\frac{\us\,\us^T}{N} \right) \right\}\right).
\label{eq:zn_I}
\end{equation}
By the spectral decomposition $\frac{\us\,\us^{T}}{N}=P_\us E_{11}P_\us$ where $E_{11}=\diag(1, 0, \ldots, 0)$. Using (\ref{eq:zn_I})  and the invariance of the Haar distribution, \begin{eqnarray}
\E_0(Z_N(\beta, O, \Lambda))= \E_0\exp\left(N\beta \tr\left\{O\Lambda O^{T}E_{11}\right\}\right).
\label{eq:zn_II}
\end{eqnarray}
This is exactly of the form (\ref{sphericalint}) with $D_N=\Lambda$ and $E_N=E_{11}=\diag(1, 0, \ldots, 0)$.
Therefore, the annealed free energy $\phi_N(\beta, \Lambda)$ for any deterministic diagonal matrix $\Lambda$, is given by a spherical integral.  The limit of $\phi_N(\beta, \Lambda)$ was derived by Guionnet and Maida \cite{guionnet_maida}, when Hypothesis \ref{hyp:exp_dist} holds:

\begin{thm}[Guionnet and Maida \cite{guionnet_maida}]
\label{thm:annealed}
Consider an orthogonal mean field spin glass model (\ref{exp}) with a deterministic diagonal matrix $\Lambda=\diag(\lambda_1, \lambda_2, \ldots, \lambda_N)$. If the sequence of empirical measures $\mu_N(\Lambda)=\frac{1}{N}\sum_{i=1}^N \delta_{\lambda_i}\dto \mu$ and Hypothesis \ref{hyp:exp_dist} holds, then for $\beta$ sufficiently small (depending on $\mu$) 
\begin{align}
 \lim_{N\rightarrow \infty}\phi_N(\beta, \Lambda) =  I_{\mu}(\beta).
\end{align}
\end{thm}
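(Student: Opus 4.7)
The plan is to carry out a Laplace/saddle-point analysis of the rank-one spherical integral
\begin{equation*}
\E_0[Z_N(\beta, O, \Lambda)] = \int_{S^{N-1}} \exp\!\bigl(N\beta \langle u, \Lambda u\rangle\bigr)\, \de\sigma(u),
\end{equation*}
already identified in (\ref{eq:zn_I})--(\ref{eq:zn_II}), where $u := O^{T}e_1$ is Haar-uniform on the sphere and $\de\sigma$ is the normalized surface measure. My goal is to show that the large-$N$ exponential rate of this integral equals $I_\mu(\beta)$, with the saddle equation expressing precisely the defining relation of the $R$-transform.

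\textbf{Step 1: Gaussian linearization and integration over the sphere.} First I would assume $\Lambda \succ 0$, which can be arranged by a shift $\Lambda \mapsto \Lambda + cI$ that only adds an affine constant to the free energy and translates $\mu$. Then I apply the Hubbard--Stratonovich identity
\begin{equation*}
\exp\!\bigl(N\beta\langle u, \Lambda u\rangle\bigr)
= \E_g \exp\!\bigl(\sqrt{2N\beta}\,\langle \Lambda^{1/2} g,\, u\rangle\bigr),
\qquad g \sim \mathcal{N}(0,I_N),
\end{equation*}
and swap the order of integration so that $u$ is integrated out first. The inner integral over $u\in S^{N-1}$ of a linear exponential depends only on $\|\Lambda^{1/2}g\|$ and has a classical Bessel representation with an explicit large-$N$ asymptotic $\exp(N\psi(\|\Lambda^{1/2}g\|^{2}/N) + O(\log N))$. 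This reduces the problem to a purely Gaussian computation involving the linear spectral statistic $\tfrac{1}{N}\sum_i \lambda_i g_i^{2}$ of $\mu_N(\Lambda)$.

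\textbf{Step 2: Saddle point and $R$-transform.} Next I would introduce an auxiliary variable $z$ conjugate to $\|\Lambda^{1/2}g\|^{2}$; the Gaussian integration factorizes as $\prod_i (1-2z\lambda_i)^{-1/2}$ in the regime $z < 1/(2\lambda_{\max})$, and Laplace's method identifies the limit of $\frac{1}{N}\log \E_0[Z_N(\beta,O,\Lambda)]$ with the supremum of a rate function whose unique critical point $z^{*}$ solves
\begin{equation*}
\frac{1}{N}\sum_{i=1}^{N} \frac{1}{z^{*}-\lambda_i} \;\longrightarrow\; \int \frac{\de\mu(\lambda)}{z^{*}-\lambda} = H_\mu(z^{*}) = 2\beta.
\end{equation*}
By the definition (\ref{R_transform}) of the $R$-transform the unique solution is $z^{*} = R_\mu(2\beta) + \tfrac{1}{2\beta}$. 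Evaluating the rate function at this saddle, and matching $\beta$-derivatives of both sides as a clean way to avoid grinding through the algebra, then shows that the Laplace limit equals $\tfrac{1}{2}\int_0^{2\beta} R_\mu(v)\, \de v = I_\mu(\beta)$.

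\textbf{Main obstacle.} The hard part, and the source of the smallness condition on $\beta$, is that the entire scheme above is valid only while $z^{*}$ stays strictly above $\lambda_{\max}$, i.e.\ outside $\textrm{supp}(\mu)$; this is exactly the requirement $2\beta < H_{\max}$ built into the definition of the $R$-transform in the excerpt. Once $2\beta$ enters the image of $H_\mu$ on the bulk, the saddle collides with the spectrum, the Gaussian MGF used in Step~2 diverges, and the replica-symmetric formula $I_\mu(\beta)$ ceases to describe the free energy. On the technical side, the remaining work is rigorously implementing Laplace's method: combining an upper bound from the Bessel asymptotic with a matching lower bound via Gaussian concentration of $\tfrac{1}{N}\|\Lambda^{1/2}g\|^{2}$ about $z^{*}$, and carefully passing from the empirical measure $\mu_N(\Lambda)$ to the limit $\mu$ using the weak convergence together with the convergence of the extreme eigenvalues guaranteed by Hypothesis~\ref{hyp:exp_dist}.
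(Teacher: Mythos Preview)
The paper does not prove this theorem; it is quoted from Guionnet and Maida \cite{guionnet_maida}. The ingredients of their argument are, however, visible in the paper's own second-moment computation (Section~\ref{section:second_moment}): one writes the Haar column as $g/\|g\|$ with $g\sim\mathcal N(0,I_N)$, so that $\langle u,\Lambda u\rangle$ has the law of $V_1=\sum_i\lambda_i g_i^{2}/\sum_i g_i^{2}$; Guionnet--Maida establish a large-deviation principle for $V_1$ with good rate function $T_\mu$ (recorded here as Proposition~\ref{prop:ldp}), and then Varadhan's lemma gives $\lim_N\phi_N(\beta,\Lambda)=\sup_x\{\beta x-T_\mu(x)\}$, which their Lemma~5.7 identifies with $I_\mu(\beta)$ for $2\beta<H_{\max}$.

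Your route is genuinely different: instead of an LDP for the bounded variable $V_1$, you run a direct saddle-point analysis in a scalar auxiliary variable, arriving at the stationarity condition $H_\mu(z^{*})=2\beta$ with solution $z^{*}=R_\mu(2\beta)+1/(2\beta)$. This is the classical contour-integral treatment of rank-one spherical integrals and is correct; the obstruction $2\beta<H_{\max}$ that you isolate is exactly the smallness requirement on $\beta$. One remark: the Hubbard--Stratonovich in your Step~1 is a detour. The same factorized integrand $\prod_i(z-2\beta\lambda_i)^{-1/2}$ and the same saddle equation arise more cleanly by writing the sphere integral as $\int_{\reals^N}\exp(\beta x^{T}\Lambda x)\,\delta(\|x\|^{2}-N)\,\de x$ and representing the delta constraint by a Laplace/Fourier integral in $z$; the auxiliary Gaussian field $g$ and the Bessel asymptotic are then unnecessary, and your Step~2 as written sits a bit uneasily on top of Step~1 precisely because it is really this direct computation. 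What the Guionnet--Maida LDP approach buys over yours is the full rate function $T_\mu$ on all of $(\lambda_{\min},\lambda_{\max})$, including the pieces outside $[x_{\min},x_{\max}]$; that refinement is irrelevant for the first moment here but is exactly what the paper needs to control the two-dimensional variational problem in Lemma~\ref{lemma:variational_problem}.
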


The proof of Theorem \ref{main} proceeds as follows: when $D$ is random in (\ref{exp}), then under Hypothesis \ref{hyp:dist} we can replace the random matrix $D$ by the deterministic matrix $\E(D)$. Theorem \ref{main} then involves computing the limit of the annealed free energy $\phi_N(\beta, \E(D))$ using the above theorem, and the corresponding second moment. This together with results about concentration of measure gives the desired result.

\begin{remark}
When $D$ is random, another natural approach is to compute the {\it total annealed free-energy} $\phi_N^{\textrm{ann}}(\beta) = \frac{1}{N} \log \E(Z_N(\beta, O , D))$,  where the expectation is respect to the joint distribution of $(O,D)$. From (\ref{eq:zn_I}) it is easy to see that 
\begin{align}
\label{eq:representation}
\phi_N^{\textrm{ann}}(\beta) = \frac{1}{N} \log \E\left( N \beta \frac{\sum_{i=1}^{N} d_i X_i^2}{\sum_{i=1}^{N} X_i^2} \right), 
\end{align}
where the $X_i$ are i.i.d. $\dN(0,1)$ random variables.

It is expected that for $\beta$ sufficiently small, this also gives the correct limit for the free energy. To this end, consider the random measure  $\nu_N = \sum_{i=1}^{N} \frac{X_i^2}{\sum_{i=1}^{N} X_i^2} \delta_{d_i}$, i.e., $\nu_N$ is a random  discrete measure which assigns random weights $\frac{X_i^2}{\sum_{i}X_i^2}$ to the random positions $d_i$. Gamboa and Rouault \cite{gamboa_rouault} derived a large deviation principle for the random measure $\nu_N$, under certain technical assumptions on the sequence  $\mu_N(D)=\frac{1}{N}\sum_{i=1}^N \delta_{d_i}$.
%
We believe that under these assumptions a second moment argument can be done to derive the high temperature limit of the free energy $\Phi_N(\beta, O, D)$. However, this requires the full large deviation principle for the sequence $\{\mu_N(D)\}_{N\geq 1}$. On the other hand, we only need control on the tails of $\mu_N(D)$ in terms of the 2-Wasserstein distance, which is generally much easier to verify.
\end{remark}

\subsection{Organization} The rest of the paper is organized as follows: The proof of Corollary \ref{cor:rom} and the application of Theorem \ref{main} to various other examples are given in Section \ref{sec:examples}. The proofs of Proposition \ref{ppn:first_step} and Theorem \ref{main} are given in Section \ref{pfppn} and Section \ref{pfmain}, respectively.

\section{Examples}
\label{sec:examples}
In this section, we apply Theorem \ref{main} to evaluate the limit of the free energy in various orthogonal mean-field spin glass models. 

\subsection{The SK Model}

Recall the definition of the SK-model introduced in Section \ref{sec:intro}. In this case, the coupling matrix $J = W/\sqrt{N}$, where $W$ is a GOE matrix of order $N$. Thus $J = ODO^T$, where $O$ is Haar distributed and independent of $D$. It is a classical result in random matrix theory that $\mu_N(D)=\frac{1}{N}\sum_{i=1}^N \mu_N(D)$ converges almost surely to the Wigner semicircle law \cite{agz} 
\begin{align}
\rho(x) = \frac{\sqrt{4- x^2} }{2\pi} \cdot \pmb 1\{x\in [-2, 2 ]\}.
\label{semicircle} 
\end{align}
Further, the edge of the empirical distribution converges to the edge of the semicircle law. 

An application of Theorem \ref{main} yields the following corollary about the high temperature limit of the free-energy. It is well known that the SK model has a phase transition at $\beta=1/2$. Our approach covers the whole high temperature region of the SK model, thus re-deriving the classical result of Aizenman et. al. \cite{alr}.

\begin{cor}
\label{sk}
For the SK model with $\beta <1/2$, $\lim_{N\rightarrow \infty}\frac{1}{N} \log Z_N(\beta, O, D) \pto \beta^2.$ 
\end{cor}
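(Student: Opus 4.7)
The plan is to specialize Theorem~\ref{main} to the case $\mu=\rho$, the semicircle law of \eqref{semicircle}, and read off the explicit formula. First I would verify that the (random) diagonal matrix $D$ of eigenvalues of $W/\sqrt{N}$ satisfies Hypothesis~\ref{hyp:dist}. Part (1) follows from the classical edge bound $\lambda_{\max}(W/\sqrt{N}) \to 2$ almost surely, so the choice $M_N=3$ works. Part (2) follows by taking $\nu_N$ to be the deterministic measure supported on the classical locations of $\rho$ and invoking GOE bulk rigidity (Erd\H{o}s--Yau--Yin and companions), which delivers $W_2(\mu_N(D),\nu_N) = O(N^{-1+\varepsilon}) \ll N^{-1/2}$ with overwhelming probability.

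Next, since the expected eigenvalues $\E(d_i)$ are themselves exponentially close to their semicircle quantiles, the deterministic sequence $\mu_N(\E(D))$ converges weakly to $\rho$ and its extrema converge to $\pm 2$, verifying Hypothesis~\ref{hyp:exp_dist}. A direct computation of the Cauchy transform of $\rho$ gives $H_\rho(z) = \tfrac12(z - \sqrt{z^2-4})$ for $|z|>2$, whence $(H_{\min},H_{\max}) = (-1,1)$ and the standard identification $R_\rho(v) = v$ on $(-1,1)\setminus\{0\}$. In particular $R'_\rho \equiv 1$, and because the interval $[U_L,U_R]$ defined in \eqref{interval} collapses to $\{0\}$ as $\beta\to 0$, condition (c) of Theorem~\ref{main} is immediate.

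Applying Theorem~\ref{main} would then yield
\[
\Phi_N(\beta,O,D) \pto I_\rho(\beta) = \frac{1}{2}\int_0^{2\beta} v\,\de v = \beta^2
\]
for all $\beta$ in the admissible range of the theorem. The one remaining point is to identify that range with the full classical high-temperature window $\beta \in [0,1/2)$. The second-moment type argument underlying Theorem~\ref{main} requires the control quantity $4\beta^2 \sup_{[U_L,U_R]}|R'_\rho| = 4\beta^2$ to stay strictly below $1$, whose natural threshold is precisely $\beta = 1/2$, matching the Aizenman--Lebowitz--Ruelle transition.

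I expect this pinning down of the sharp threshold $\beta < 1/2$, rather than merely "$\beta$ small," to be the main (and essentially bookkeeping) obstacle in turning the outline into a proof; everything else reduces to well-established random-matrix rigidity estimates together with the explicit evaluation of the $R$-transform of the semicircle law.
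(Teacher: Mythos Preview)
Your outline matches the paper's proof in structure: verify Hypotheses~\ref{hyp:dist} and~\ref{hyp:exp_dist} via known rigidity/Wasserstein bounds for the GOE spectrum, compute $R_\rho(v)=v$ and hence $I_\rho(\beta)=\beta^2$, and check condition~(c). (The paper cites Dallaporta's Wasserstein estimate rather than Erd\H{o}s--Yau--Yin rigidity, but either works.)

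The gap is in your identification of why the argument reaches all of $\beta<1/2$. You locate the threshold in the contraction constraint $4\beta^2\sup|R'_\rho|<1$ from the fixed-point step (Lemma~\ref{lemma:fixedpt}), and for the semicircle this does give $\beta<1/2$. But the general proof of Lemma~\ref{lemma:variational_problem} carries a \emph{second}, more restrictive constraint: the boundary analysis showing that the maximizer of $\psi$ over $[\lambda_{\min},\lambda_{\max}]^2$ cannot lie outside $[x_{\min},x_{\max}]^2$ uses $\beta<H_{\max}/4$, which for the semicircle is only $\beta<1/4$. So invoking Theorem~\ref{main} as a black box, even tracking constants, does not deliver the full window.

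The paper closes this gap not by bookkeeping but by a model-specific argument (Remark~\ref{rem:sk}): one computes the rate function $T_\rho$ explicitly, observes the global lower bound $T_\rho(x)\ge x^2/4$ on $[-2,2]$, and replaces $\psi$ by the majorant $\tilde\psi(x,y)=F(x,y)-x^2/4-y^2/4$. A direct Hessian computation then shows $\tilde\psi$ is strictly concave on all of $[-2,2]^2$ precisely when $\beta<1/2$, and its unique maximum $(2\beta,2\beta)$ lies in $[-1,1]^2$ where $\tilde\psi=\psi$. This global concavity argument is what actually pins down the sharp threshold, and it does not fall out of condition~(c) or the contraction estimate alone.
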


\begin{proof}In this case $\lambda_{\min} = -2$, $\lambda_{\max}=2$. Using the density of the semi-circle law (\ref{semicircle}), the Hilbert transform can be easily computed to be $H_{\rho}(z) = \frac{1}{2}(z- \sqrt{z^2 - 4})$ for $z\in \reals\setminus [-2, 2]$. This implies $H_{\max}= 1$,  $H_{\min}= -1$, and $x_{\max}= 1$, $x_{\min}= -1$. Thus, using Definition \ref{R_transform},  $R_{\rho}(z) = z$ on $(-1,1)\backslash\{0\}$, $I_\rho(z)=z^2$ , and condition (c) in Theorem \ref{main} holds trivially. This gives the desired conclusion subject to the verification of the other conditions of Theorem \ref{main}. 

It is well known that the measure $\mu_N(\E(D)):=\frac{1}{N}\sum_{i=1}^N \delta_{\E(d_i)}$ satisfies Hypothesis \ref{hyp:exp_dist} \cite{agz}.
Further, by \cite[Corollary 4]{dallaporta1}  there exists $C>0$ such that
\begin{align}
\E \{W_2( \mu_N(D), \mu_N(\E(D) ))\} \leq C \frac{\sqrt{\log N}}{N},
\end{align} 
Hypothesis \ref{hyp:dist} then follows using Markov's inequality.

To see that the second moment method employed in our proof works up to $\beta<1/2$, see Remark \ref{rem:sk}. 
\end{proof}

\subsection{The Random Orthogonal Model} 
\label{sec:rom}

In the random orthogonal model (ROM) introduced in Section \ref{sec:intro} the coupling matrix $J= ODO^T$, where $D= \diag (d_1, \cdots, d_N)$ is a deterministic sequence of $\{ \pm 1\}$ such that the empirical measure $\mu_N(D)$ converges weakly to $\mu_p:=p\delta_{1} + (1-p) \delta_{-1}$.

\begin{ppn}
\label{ppn:rom} For the random orthogonal model (ROM), there exists a $\beta_{m}>0$ such that for $\beta<\beta_m$, 
\begin{align}
\frac{1}{N} \log Z_N(\beta, O, D) \pto \frac{1}{2}\int_0^{2\beta} \frac{ \sqrt{1+ 4z (m+z)} -1}{2z} \mathrm dz.
\label{rom1}
\end{align}
where $m = 2p -1$. 
\end{ppn}

\begin{proof}In this case, the diagonal matrix $D$ is deterministic. Thus, Hypothesis \ref{hyp:dist} holds trivially. Also, since the limiting measure $\mu_p$ is supported on two points,  Hypotheses \ref{hyp:exp_dist} is satisfied. 

In this case, $\lambda_{\max}=1$ and $\lambda_{\min}=-1$. Moreover, by direct calculations $H_{\mu_p}(z) = \frac{z+ m}{z^2 -1}$ on $(-\infty, -1) \cup (1, \infty)$. Thus, $H_{\max}=\infty$ and $H_{\min}= - \infty$ which implies that $$R_{\mu_p}(z) = \frac{1}{2z}\left(\sqrt{1+ 4z (m+z)} -1\right)$$ is bijective from $\reals\backslash\{0\}$ to $(-1,1)\backslash\{m\}$.

To verify condition (c) in Theorem~\ref{main}, recall the definition of $U_L$ and $U_R$ from~\eqref{interval}. For $\beta_0\in [U_L, U_R]$, there exists $\varepsilon(\beta_0)$ such that $|\varepsilon(\beta_0)|\leq |\tanh\beta(x_{\max}-x_{\min})|$ and $\beta_0=2\beta(1+\varepsilon(\beta_0))$. Thus, 
\begin{align}
R'_{\mu_p}(\beta_0) = \frac{16 \beta^2 (1+\varepsilon(\beta_0))^2 -1 + \sqrt{1 + 4 \beta_0(m +\beta_0)} }{16 \beta^2 (1+\varepsilon(\beta_0))^2 \sqrt{1 + 4 \beta_0(m +\beta_0)}}. 
\end{align}
From the above expression, it is easy to check that $\beta^2\sup_{\beta_0\in [U_L, U_R]}R'_{\mu_p}(\beta_0)\rightarrow 0$, as $\beta\rightarrow 0$. This verifies condition (c) and the result follows.
\end{proof}

The integral in (\ref{rom1}) has a closed form expression, which can be easily computed. We refrain from writing this explicitly for notational clarity. However, for $p=1/2$, in which case $m=0$, (\ref{rom1}) simplifies to the expression in Corollary \ref{cor:rom}.

\begin{remark} Marinari et al. \cite{marinari_parisi} predicted that replica symmetry is broken in ROM with $p=1/2$ for $\beta\geq 3.84$. The exact location of symmetry breaking is, however, unclear. Corollary \ref{cor:rom} shows that there exists a $\beta_{0}$ up to which the limit of free energy is given by the annealed limit. The value of $\beta_{0}$ can be calculated as follows: Let $F(x, y)=\beta(x+y)+\log\cosh\beta(x+y)$, and 
\begin{equation}
(x^*(\beta), y^*(\beta)):=\arg\sup_{x, y\in \reals} (F(x,y) - T_{\mu}(x) - T _{\mu}(y)),
\label{betacritical}
\end{equation}
where $T_{\mu}(z):=-\frac{1}{4}\log(1-z^2)$, for $z\in [-1, 1]$. It is follows from the proof of Theorem \ref{main} (see (\ref{2momentvar})) that $\beta_0$ is largest $\beta\geq 0$ such that the $x^*(\beta)=y^*(\beta)$. Numerically solving the optimization problem (\ref{betacritical}) approximately gives $\beta_0 \leq 2.7$, proving that replica symmetry is preserved for $\beta\leq 2.7$.
\end{remark}

\subsection{Gaussian Hopfield Model} In the Gaussian Hopfield model the coupling matrix $J= \frac{1}{p} XX^T$, where $X= ((X_{ij}))$ is a $N \times p$  matrix with i.i.d. $\cN(0, 1)$. For simplicity, we assume $0< c_1 < N/p < c_2 <1$. In this case,  spectral distribution of $J$ converges weakly almost surely to the Marchenko-Pastur law with density 
\begin{align}
f(x) = \frac{\sqrt{4\lambda - (x -1 - \lambda)^2}}{ 2 \pi x}, \quad  x \in ( (1- \sqrt{\lambda})^2, (1+ \sqrt{\lambda})^2),
\label{mp}
\end{align}
where $p/N \rightarrow \lambda $. Thus, $\lambda_{\min} = (1- \sqrt{\lambda})^2$ and $\lambda_{\max} = (1 + \sqrt{\lambda})^2$ in this example. 

Using the above density and Theorem~\ref{main} the limit of the free energy can be derived for high temperatures. 
 
\begin{ppn}In the Gaussian Hopfield model, for $\beta$ sufficiently small,
\begin{align}
\frac{1}{N} \log Z_N(\beta) \pto   I_f(\beta)=\frac{\lambda}{2} \log \left( \frac{1}{1- 2 \beta}\right). 
\label{mplimit}
\end{align}
\end{ppn}

\begin{proof} The Hilbert Transform of the Marchenko-Pastur law (\cite[Example 3.3.5]{hiai_petz})  is known to be 
\begin{align}
H_{f}(x) = \frac{ x + 1 - \lambda - \sqrt{(x-1- \lambda)^2 - 4\lambda}}{2x}. 
\end{align}
Thus, in this example, $H_{\max} = 1/(1+ \sqrt{\lambda})$ and $H_{\min}= 1/(1-\sqrt{\lambda})$, which implies that $x_{\max}= \lambda + \sqrt{\lambda}$ and $x_{\min} = \lambda- \sqrt{\lambda}$. 
The $R$-transform~(\ref{R_transform}) is $R_f(z)= \lambda/(1-z)$. Hence, $I_f$~(\ref{Ibeta}) can be computed easily, which gives the formula in~(\ref{mplimit}). Finally, to check condition (c) in Theorem 1 note that
\begin{align*}
R'(z) = \frac{\lambda}{(1- z)^2}. 
\end{align*}
The above representation implies that  $\zeta(\beta) \to 0$ as $\beta \to 0$, thus verifying the required condition.

The result now follows if the spectrum of the coupling matrix $J$ satisfies Hypothesis \ref{hyp:dist}. To this end, note that simple modifications of the arguments in \cite[Corollary 2]{dallaporta2} yield the following: there exists a constant $c>0$ such that 
\begin{align}
\label{eq:hopfield_wasserstein}
\E(d_2(\mu_N(D), \E(\mu_N(D))) ) \leq \frac{(\log N)^{c \log \log N}}{N}. 
\end{align}
Hypothesis~\ref{hyp:dist} follows by an application of Markov's inequality. 
%
%
\end{proof}


\section{Proofs} 

\subsection{Proof of Proposition \ref{ppn:first_step}}
\label{pfppn}

In this section the proof of Proposition \ref{ppn:first_step} is presented. Fix $\delta>0$ and recall that 
$\Phi_N(\beta, O, D)= \frac{1}{N}\log Z_N(\beta, O, D)$. Therefore, by triangle inequality, 
\begin{align}
\P(| \Phi_N(\beta) - \Gamma_N(\beta,  \E(D) ) | > \delta) &\leq T_1+ T_2, 
\label{trieq}
\end{align}
where
\begin{align}
T_1 &= \P\left( \left| \frac{1}{N} \log Z_N(\beta, O ,D) - \frac{1}{N} \E_{0} \log Z_N(\beta, O , D) \right| > \frac{\delta}{2} \right), 
\label{T1}
\end{align}
and
\begin{align}
T_2 &= \P\left( \left| \frac{1}{N} \E_0 ( \log Z_N(\beta, O , D) ) - \frac{1}{N} \E_0 \log Z_N(\beta , O , \E(D))  \right | > \frac{\delta}{2} \right).
\end{align}

We first control $T_2$. By the rotational invariance of $O(N)$,  $ \frac{1}{N} \E_0 \log Z_N(\beta, O , D)$ is actually a function of only the empirical distribution $\mu_N(D):=\frac{1}{N}\sum_{i=1}^{N} \delta_{d_i}$, where $D= \diag(d_1, \cdots, d_N)$. Thus, without loss of generality assume $d_1 \geq d_2 \geq \cdots \geq d_N$. Let $O=[o_1: o_2: \cdots: o_N]$ be the columns of the matrix $O$. By the Cauchy-Schwarz inequality, 
\begin{equation}
|\us^{T} ODO^{T} \us-\us^{T} O\E(D)O^{T} \us|=\left|\sum_{i=1}^N (d_i-\E(d_i)) (\vec \sigma^{T} o_i)^2\right|\leq N \sqrt{ \sum_i (d_i - \E(d_i))^2},
\label{w2d} 
\end{equation}
since $(\vec \sigma^{T} o_i)^2=N$, for all $i\in [N]$. This implies that
\begin{align}
\left|\frac{1}{N}\E_0\left(\log \frac{Z_N(\beta, O, D)}{Z_N(\beta, O, \E(D))} \right) \right| \leq & \beta \sqrt{ \sum_i (d_i - \E(d_i))^2}\nonumber\\
=& \beta \sqrt N W_2 (\mu_N(D) ,\mu_N(\E (D) )),
\end{align}
where the last step uses  $W_2^2(\mu_N(D) ,\mu_N(\E(D))) =\frac{1}{N} \sum_{i=1}^{N} (d_i - \E(d_i))^2$ .  Therefore,
\begin{align}
T_2 \leq \P\left( W_2(\mu_N(D), \E(\mu_N(D))) > \frac{\delta}{2\beta \sqrt{N}}\right ) \to 0  
\label{T2l}
\end{align}
by Hypothesis \ref{hyp:dist}, as $N\rightarrow \infty$. 


It remains to control the first term $T_1$. For $O \in O(N)$ and any fixed diagonal matrix $\Lambda= \diag(\lambda_1 ,\cdots, \lambda_N)$ define,
\begin{align}
F_{\Lambda}(O)= \frac{1}{N} \log \sum_{\us \in S_N} \exp(\beta \us^{T} O\Lambda O^{T} \us). 
\label{fo}
\end{align}
Let $\| \Lambda \|_{\infty} = \max_{1\leq i \leq N} |\lambda_i|$. Moreover, for any $N\times N$ symmertic matrix $A$, denote the spectral norm by $||A||_{2}=\sup_{\vec x\in \reals}\frac{||A\vec x||_2}{||\vec x||_2}$  and the Frobenius norm by $||A||_F=(\tr(A^2))^{\frac{1}{2}}$. It is easy to see that for $O_1, O_2\in O(N)$ and a unit vector $\vec x$ (that is $\|\vec x\|_2=1$),
\begin{align}
|\vec x^{T}O_1 \Lambda O_1^{T}\vec x - \vec x^{T}O_2 \Lambda O_2^{T}\vec x| &\leq  |\vec x^{T} O_1 \Lambda (O_1- O_2)^{T} \vec x| + | \vec x^{T} (O_1 - O_2) \Lambda  O_2^{T}\vec x| \nonumber \\ 
&\leq 2 \|\Lambda \|_{\infty} \|O_1- O_2\|_{2} \nonumber \\
&\leq 2 \| \Lambda \|_{\infty} \|O_1- O_2\|_{F}. \label{concentration_bound} 
\end{align}
Thus, using \eqref{concentration_bound},
\begin{align}
|F(O_1) - F(O_2)| &= \frac{1}{N} \left| \log \frac{Z_N(\beta, O_1, \Lambda)}{Z_N(\beta, O_2, \Lambda)} \right|  \leq 2 \| \Lambda \|_{\infty} \beta \|O_1 - O_2\|_{F}. \nonumber 
\end{align}
This implies $F$ is Lipschitz with respect to the Frobenius norm. 

Sub-gaussian tail inequalities are known for Lipschitz functions on $SO(N)$ (see Gromov and Milman \cite{gromov_milman}). This can be used to complete the proof as follows: 
Now, let $T$ be the operator which takes $O\in SO(N)$ and changes the sign of the first column of $O$.  Clearly, for $O \in SO(N)$, $F(O)=F(TO)$. Let $\P_1$ and $\E_1$ be Haar measure and the expectation with respect it on $SO(N)$, respectively. Thus, $\E_0(F_D(O))= \E_1(F_D(O))$, and recalling (\ref{T1}) and (\ref{fo}) it follows that 
\begin{align}
T_1\leq &\E \P_1\left (|F_D(O)-\E_1 (F_D(O))|>\frac{\delta}{2},  \| D \|_{\infty} \leq  M_N \right)+ \P( \| D \|_{\infty} > M_N) \nonumber\\
\leq &\exp \Big (-\frac{C N \delta^2}{ \beta^2 M_N^2} \Big) + \P( \| D \|_{\infty} > M_N),
\label{T1l}
\end{align}
where $C>0$ is a universal constant. By Hypothesis \ref{hyp:dist}, the RHS above goes to zero as $N \to \infty$. 

Combining (\ref{T2l}) and (\ref{T1l}) with (\ref{trieq}) the result follows. 

%


\subsection{Proof of Theorem \ref{main}}
\label{pfmain} 

By concentration arguments identical to those used in controlling the term $T_1$ in Proposition \ref{ppn:first_step}, the following lemma can be proved. 

\begin{lemma}
\label{concentration}
For any $\beta>0$, there exists an universal constant $c$, independent of $N$, such that
\begin{align}
\mathbb P\left( \left| \Phi_N(\beta, O, \E(D))  -  \E_0 \Phi_N(\beta, O, \E(D)) \right|  > \delta \right) \leq \exp{(- cN \delta^2/ \beta^2)} . 
\end{align} 
\end{lemma}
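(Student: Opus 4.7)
The plan is to mirror the concentration argument used for the term $T_1$ in the proof of Proposition \ref{ppn:first_step}, now applied directly to $F(O) := \Phi_N(\beta, O, \E(D))$ viewed as a function on the orthogonal group $O(N)$. Since $\E(D)$ is deterministic, the truncation step $\{\|D\|_\infty > M_N\}$ that was needed in the earlier argument simply disappears, and the proof reduces cleanly to a Lipschitz-concentration statement.

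First, I would establish the Frobenius Lipschitz property of $F$. Repeating verbatim the Cauchy--Schwarz / operator-norm computation leading to \eqref{concentration_bound}, with $\Lambda = \E(D)$, one obtains
\[
|F(O_1) - F(O_2)| \leq 2\beta \|\E(D)\|_\infty \, \|O_1 - O_2\|_F,
\]
so $F$ is Lipschitz with constant $L = 2\beta \|\E(D)\|_\infty$. Under Hypothesis \ref{hyp:exp_dist}, $\lambda_{\min}(\E(D))$ and $\lambda_{\max}(\E(D))$ converge to finite limits, so $\|\E(D)\|_\infty$ is uniformly bounded in $N$ by a model-dependent constant $M$, giving $L \leq 2M\beta$.

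Next, because the bilinear form $O\E(D)O^T$ is invariant under flipping the sign of any column of $O$, one has $F(O) = F(O')$ where $O'$ differs from $O$ only by a sign change in the first column. This involution interchanges the two connected components $SO(N)$ and $O(N)\setminus SO(N)$ and is measure-preserving with respect to Haar on $O(N)$; consequently $\E_0 F(O) = \E_1 F(O)$, where $\E_1$ denotes expectation under the Haar measure $\P_1$ on $SO(N)$. I would then invoke the sub-Gaussian concentration inequality of Gromov--Milman \cite{gromov_milman} for Lipschitz functions on $SO(N)$: for any Lipschitz $f: SO(N) \to \reals$ with Frobenius Lipschitz constant $L$,
\[
\P_1\bigl(|f(O) - \E_1 f(O)| > \delta\bigr) \leq \exp\!\left(-\frac{c'N\delta^2}{L^2}\right)
\]
for a universal $c' > 0$. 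Substituting $L \leq 2M\beta$ and absorbing $1/(4M^2)$ into the constant yields the claimed bound with $c := c'/(4M^2)$.

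There is no real obstacle here, since the calculation is essentially a specialization of the argument already carried out for $T_1$ in Proposition \ref{ppn:first_step}; the only point worth flagging is that the constant $c$ in the lemma inevitably depends (through $M$) on the uniform bound for $\|\E(D)\|_\infty$ guaranteed by Hypothesis \ref{hyp:exp_dist}, rather than being truly universal across all coupling distributions.
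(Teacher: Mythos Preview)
Your proposal is correct and follows exactly the route the paper intends: the paper's entire proof is the sentence ``By concentration arguments identical to those used in controlling the term $T_1$ in Proposition \ref{ppn:first_step}, the following lemma can be proved,'' and you have spelled out precisely that argument, correctly observing that the truncation on $\|D\|_\infty$ is unnecessary here since $\E(D)$ is deterministic with bounded entries. Your closing remark that $c$ implicitly depends on the uniform bound $M$ for $\|\E(D)\|_\infty$ (via Hypothesis~\ref{hyp:exp_dist}) is a fair observation about the statement's phrasing.
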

\noindent

The  proof  of Theorem \ref{main} also requires computing the first and second annealed moments of $Z_N(\beta, O, \E(D))$. 

\begin{ppn}
\label{second_moment}
Under the assumptions of Theorem \ref{main}, for $\beta$ sufficiently small (possibly depending on the limiting measure $\mu$), 
\begin{align}
\lim_{N \to \infty} \frac{1}{N} \log \E_0(Z_N(\beta, O ,\E(D))) = \lim_{N \to \infty} \frac{1}{2N} \log \E_0(Z_N(\beta, O ,\E(D))^2).  
\end{align}
\end{ppn}

The above lemma is the most challenging part of our argument and the proof is deferred to Section \ref{section:second_moment}.  

The proof of Theorem \ref{main} can be completed easily by combining Lemma \ref{concentration} and Proposition \ref{second_moment} with Theorem \ref{thm:annealed}. To this end,  set
$\gamma_0 = \frac{4 \E_0 Z_N(\beta, O, \E(D))^2}{ (\E_0 Z_N(\beta, O , \E(D)))^2}$. Recall the definition of the {\it{annealed free energy} }
\begin{align*}
\phi_N(\beta, \Lambda)=\frac{1}{N}\log \E_0 Z_N(\beta, O, \Lambda). 
\end{align*}
Then by \cite[Lemma 4.1.1]{montanarinotes}
\begin{align}
\P\left( \left | \Phi_N(\beta, O, \E(D))  -  \phi_N(\beta, \E(D)) \right| < \frac{1}{N} \log \gamma_0 \right) \geq \frac{1}{\gamma_0}. 
\label{eq:tails}
\end{align}
Also, note that  $\Gamma_N(\beta, \E(D))= \E_0 \Phi_N(\beta, O, \E(D))$. Thus, inequality \eqref{eq:tails} combined with Lemma \ref{concentration} gives 
\begin{equation}
\lim_{N\rightarrow \infty}\Gamma_N(\beta, \E(D)) =\lim_{N\rightarrow \infty} \frac{1}{N} \log \E_0(Z_N(\beta, O ,\E(D)))=I_{\mu}(\beta),
\label{1momentgm}
\end{equation}
where the last step uses Theorem \ref{thm:annealed}. 
Finally, using Proposition \ref{second_moment}, Theorem \ref{main} follows.

\subsubsection{\textbf{Proof of Proposition \ref{second_moment}}}
\label{section:second_moment}

For any function $f: S_N\times S_N\mapsto \mathbb R$, denote by $\E_1 f(\vec \sigma, \vec{\tau})=\frac{1}{2^{2N}}\sum_{\vec \sigma, \vec{\tau} \in S_N}f(\vec \sigma, \vec{\tau})$, the expectation over the uniform measure over $S_N\times S_N$. Let $\Lambda=\E(D)=\diag(\lambda_1, \lambda_2, \cdots, \lambda_N)$. Therefore,

\begin{align}
\E_0(Z_N(\beta, O , \E(D))^2 =&\E_0(Z_N(\beta, O , \Lambda)^2)\nonumber\\
&=  \E_1 \E_0 \exp\left(N\beta\tr\left\{O \Lambda O^{T}\left(\frac{\vec \sigma\,\us^{T}}{N}+\frac{\vec{\tau}\,\vec{\tau}'}{N}\right)\right\} \right) \nonumber\\
&= \E_1 \E_0 \exp\left(N\beta \left\{\left(1+\frac{\us^{T}\vec{\tau}}{N}\right)(O \Lambda O^{T})_{11}+\left(1-\frac{\us^{T}\vec{\tau}}{N}\right)(O \Lambda O^{T})_{22} \right\} \right),\nonumber 
\end{align}
where we use the observation that the non-zero eigenvalues of $(\us \us^T + \vec{\tau} \vec{\tau}^T)/N$ are $(1+ \us ^T \vec{\tau}/N)$ and $(1- \us^T \vec{\tau}/N)$ respectively. 
Let  $V_1=(O \Lambda O^{T})_{11}$ and $V_2=(O \Lambda O^{T})_{22}$. By interchanging the order of the expectation and observing that $\E_1 e^{\lambda \us^{T}\vec{\tau}}=(\cosh \lambda)^N$, for any $\lambda\in \mathbb R$, it follows that
\begin{eqnarray}
\E_0(Z_N(\beta, O, \Lambda)^2)&=& \E_0\left(\exp\left(N F(V_1, V_2)\right)\right) 
\label{2moment}
\end{eqnarray}
where $F(x,y)= \beta (x+y) + \log \cosh \beta(x-y)$. 

The non-negativity of the $\log \cosh$ function trivially implies that $F(x,y)\geq \beta(x+y)$. Then by \cite[Theorem 1.7]{guionnet_maida}, for $\beta$ sufficiently small, we have
\begin{align}
\liminf_{N \to \infty} \frac{1}{2N} \log \E_0(Z_N(\beta, O , \Lambda)^2) & \geq  \lim_{N \to \infty} \frac{1}{2N} \log \E_0 \exp(N \beta (V_1 + V_2))\nonumber\\
&=  \lim_{N \to \infty} \frac{1}{2N} \log \E_0 \exp( N \beta (V_1 + V_2))=  I_{\mu}(\beta),
\label{eq:lowerb}
\end{align}
where $\mu$ is the limit of the empirical measure $\mu_N(\E(D)):=\frac{1}{N}\sum_{i=1}^N \delta_{\E(d_i)}$.

For the upper bound, let $\textbf{\em{X}}=(X_1, X_2, \ldots, X_N)'$ and $\textbf{\em{Y}}=(Y_1, Y_2, \ldots, Y_N)'$ be i.i.d. $\dN(0, \mathrm I)$. If $$\textbf{\em{Z}}= \textbf{\em{Y}} - \frac{\langle \textbf{\em{X}} ,\textbf{\em{Y}} \rangle}{ \langle \textbf{\em{X}}, \textbf{\em{X}} \rangle } \textbf{\em{X}}=(Z_1, Z_2, \ldots, Z_N)',$$ then 
\begin{align}
(V_1 , V_2) \overset{\sD}{=}  \left(\frac{\sum_{i=1}^N \lambda_i X_i^2}{\sum_{i=1}^N X_i^2} , \frac{\sum_{i=1}^N \lambda_i Z_i^2}{\sum_{i=1}^N Z_i^2} \right). 
\label{V1V2}
\end{align} 
Let $V_2'= \frac{\sum_{i=1}^N \lambda_i Y_i^2}{\sum_{i=1}^N Y_i^2}$. Note that $V_1$ and $V_2'$ are independent, but $V_1$ and $V_2$ are not. The following lemma shows that we can replace $V_2$ by $V_2'$ to get an upper bound:

\begin{lemma}
\label{lemma:upper_bound}
Under the assumptions of Theorem \ref{main}, for any $\beta >0$,
\begin{align}
\limsup_{N \to \infty} \frac{1}{2N} \log \E_0(Z_N(\beta, O , \Lambda)^2) \leq \lim_{ N \to \infty } \frac{1}{2N} \log \E_0 \exp( N F (V_1 ,V_2')),
\label{upperb} 
\end{align}
where $F(x,y)= \beta (x+y) + \log \cosh \beta(x-y)$.
\end{lemma}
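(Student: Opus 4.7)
The plan is to construct a coupling in which $V_2'$ is a simple algebraic function of $V_1$, $V_2$, and an auxiliary scalar $\gamma$ that is independent of $(V_1, V_2)$, and then to apply Jensen's inequality after integrating out $\gamma$ to obtain a pointwise lower bound on $\E_0 \exp(NF(V_1, V_2'))$ in terms of $\E_0 \exp(NF(V_1, V_2))$. Starting from the Gaussian representation~\eqref{V1V2}, I would decompose $\textbf{\em Y} = \textbf{\em Z} + \alpha\, \textbf{\em X}$ with $\alpha = \langle \textbf{\em X},\textbf{\em Y}\rangle/\|\textbf{\em X}\|^2$, and set $\gamma := \langle \textbf{\em X},\textbf{\em Y}\rangle/(\|\textbf{\em X}\|\|\textbf{\em Y}\|)$ and $R_0 := \textbf{\em X}^T \Lambda \textbf{\em Z}/(\|\textbf{\em X}\|\|\textbf{\em Z}\|)$. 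Using $\|\textbf{\em Y}\|^2 = \|\textbf{\em Z}\|^2 + \alpha^2 \|\textbf{\em X}\|^2$, a direct calculation gives the exact identity
\[
V_2' = (1-\gamma^2)\, V_2 + \gamma^2 V_1 + 2\gamma\sqrt{1-\gamma^2}\, R_0,
\]
and the key structural fact is that $\gamma$ is independent of the triple $(V_1, V_2, R_0)$: indeed $(V_1, V_2, R_0)$ depends on $(\textbf{\em X}, \textbf{\em Z})$ only through the unit directions $\textbf{\em X}/\|\textbf{\em X}\|$ and $\textbf{\em Z}/\|\textbf{\em Z}\|$, whereas $\gamma = \eta/\sqrt{\|\textbf{\em Z}\|^2 + \eta^2}$ with $\eta := \alpha\|\textbf{\em X}\| \sim \mathcal N(0,1)$ depends only on the magnitude $\|\textbf{\em Z}\|^2$ and the independent scalar $\eta$, both of which are independent of these directions by the standard Gaussian direction-magnitude decomposition. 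Marginally $\gamma$ has the density of one coordinate of a uniform vector on $S^{N-1}$, so $\E\gamma = 0$, $\E\gamma^2 = 1/N$, and $\E[\gamma\sqrt{1-\gamma^2}] = 0$.

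Conditioning on $(V_1, V_2, R_0)$ and applying Jensen's inequality to the convex function $\exp(NF(V_1, \cdot))$,
\[
\E_\gamma \exp(NF(V_1, V_2')) \;\geq\; \exp\!\bigl(NF(V_1, \E_\gamma V_2')\bigr) \;=\; \exp\!\left(NF\!\left(V_1, V_2 + \tfrac{V_1-V_2}{N}\right)\right).
\]
Since $F(V_1, \cdot)$ is $2\beta$-Lipschitz (as $|F_y| \leq 2\beta$) and $|V_1 - V_2| \leq \lambda_{\max} - \lambda_{\min}$ is bounded, the last display is at least $\exp\!\bigl(NF(V_1, V_2) - 2\beta(\lambda_{\max}-\lambda_{\min})\bigr)$. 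Taking expectations over $(V_1, V_2, R_0)$ yields
\[
\E_0 \exp(NF(V_1, V_2')) \;\geq\; e^{-2\beta(\lambda_{\max}-\lambda_{\min})}\, \E_0 \exp(NF(V_1, V_2)),
\]
and applying $\frac{1}{2N}\log$ together with $\limsup_{N \to \infty}$ yields~\eqref{upperb}, since the prefactor contributes only $O(1/N)$.

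The main technical subtlety is the independence of $\gamma$ from $(V_1, V_2, R_0)$, which comes from the orthogonal Gaussian decomposition $\textbf{\em Y} = \textbf{\em Z} + \alpha\textbf{\em X}$ together with the independence of magnitudes and directions of Gaussian vectors; once that is in place, the rest is a short application of convexity and Jensen's inequality.
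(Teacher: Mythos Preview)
Your argument is correct and takes a genuinely different route from the paper's.

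The paper proves the lemma by a \emph{localization} argument borrowed from Guionnet--Maida: it restricts to the high-probability event
\[
B_N(\kappa)=\Big\{\Big|\tfrac{1}{N}\|\textbf{\em X}\|^2-1\Big|,\ \Big|\tfrac{1}{N}\|\textbf{\em Y}\|^2-1\Big|,\ \Big|\tfrac{1}{N}\langle\textbf{\em X},\textbf{\em Y}\rangle\Big|\le N^{-\kappa}\Big\},
\]
observes that $(V_1,V_2)$ is independent of $B_N(\kappa)$ (via a polar/angular change of coordinates), and uses that on $B_N(\kappa)$ one has $|V_2-V_2'|\lesssim N^{-\kappa}$ together with the $2\beta$-Lipschitz property of $F(V_1,\cdot)$ to replace $V_2$ by $V_2'$ at subexponential cost.

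You instead derive the exact identity $V_2'=(1-\gamma^2)V_2+\gamma^2 V_1+2\gamma\sqrt{1-\gamma^2}\,R_0$, establish that $\gamma$ is independent of $(V_1,V_2,R_0)$ (this is correct: conditioning on $u=X/\|X\|$, the triple $(\eta,\|Z\|,Z/\|Z\|)$ is mutually independent with the law of $(\eta,\|Z\|)$ not depending on $u$), and then use Jensen's inequality for the convex map $y\mapsto\exp(NF(V_1,y))$ to average out $\gamma$. Since $\E\gamma^2=1/N$ and $\E[\gamma\sqrt{1-\gamma^2}]=0$, this yields $\E_0\exp(NF(V_1,V_2'))\ge e^{-O(1)}\E_0\exp(NF(V_1,V_2))$, which is exactly what is needed.

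Your approach is shorter and avoids the localization machinery entirely. Its one additional requirement is the convexity of $F$ in its second argument (equivalently $\partial_y^2 F=\beta^2\sech^2\beta(x-y)\ge 0$), which holds here but which the paper's Lipschitz-based argument does not need; the paper's method would therefore transfer unchanged to non-convex test functions. A small cosmetic point: the bound $|V_1-V_2|\le \lambda_{\max}-\lambda_{\min}$ should really read $|V_1-V_2|\le \lambda_{\max}(\Lambda)-\lambda_{\min}(\Lambda)$ for finite $N$, which is uniformly bounded by Hypothesis~\ref{hyp:exp_dist}(b); this does not affect the conclusion.
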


\begin{proof}The lemma will be established using a ``localization" argument similar to the one used in \cite{guionnet_maida}. Fix $\kappa < 1/2$ and
\begin{align} 
B_N(\kappa) = \left\{ \left| \frac{1}{N} \sum_{i=1}^{N} X_i^2 -1 \right| \leq N^{-\kappa}, \left|\frac{1}{N} \sum_{i=1}^{N} Y_i^2 -1\right| \leq N^{-\kappa}, \left|\frac{1}{N} \sum_{i=1}^{N} X_i Y_i \right| \leq N^{-\kappa} \right\} . 
\end{align}
We adopt the following system of coordinates in $\reals^{2N}$: $r, \alpha_1^{(1)} ,\cdots, \alpha_{N-1}^{(1)}$ are the polar coordinates of $\textbf{\em{X}}$, $r_2= || \textbf{\em{Y}}||$, $\beta_2$ is the angle between $\textbf{\em{X}}$ and $\textbf{\em{Y}}$, and $\alpha_1^{(2)},\cdots, \alpha_{N-2}^{(2)}$ are the angles needed to spot $\textbf{\em{Y}}$ on a cone of angle $\beta_2$ around $\textbf{\em{X}}$. It is easy to see that $(V_1,V_2)$ is a function of the $\alpha$'s while the event $B_N(\kappa)$ is determined by $r$ and the $\beta$'s. So $(V_1,V_2)$ and $B_N(\kappa)$ are independent. 

Let $I_N = \E_0\exp(N F(V_1,V_2))$. By (\ref{2moment}), $\frac{1}{2N} \log \E_0(Z_N(\beta, O , \Lambda)^2) = \frac{1}{2N}\log I_N$. Therefore, to prove (\ref{upperb}) it suffices to show that 
\begin{align}
I_N \leq \ve(N,\kappa) \E_0( \mathbf{1}_{B_N(\kappa)} \exp(N F(V_1,V_2')) )  
\label{eq:sandwich}
\end{align}
where $\ve(N,\kappa) \leq C(\kappa) \exp(N^{1-2\kappa})$ for some constant $C(\kappa)$ and $N$ sufficiently large. 

By bounding the moment generating functions of $X_1^2$ and $X_1Y_1$ suitably in a neighborhood of zero, we get
\begin{align}
& \P(B_N(\kappa)^c) \nonumber\\
&\leq \P\left( \Big|\frac{1}{N} \sum_{i=1}^N X_i^2 - 1 \Big| \geq N^{-\kappa} \right) + \P\left( \Big|\frac{1}{N} \sum_{i=1}^N Y_i^2 - 1\Big| \geq N^{-\kappa} \right) + \P\left( \Big| \frac{1}{N} \sum_{i=1}^N X_i Y_i \Big| > N^{-\kappa}\right)\nonumber \\
&\leq  C'(\kappa) \exp{(-c N^{1- 2 \kappa})}, 
\label{eq:mgf} 
\end{align}
for some constants $C'(\kappa)$, $c>0$ and $N$ sufficiently large. Now, using the independence of $(V_1,V_2)$ and $B_N(\kappa)$, 
\begin{align}
I_N \leq \frac{1}{\P(B_N(\kappa))} \E_0( \mathbf{1}_{B_N(\kappa)} \exp(N F(V_1,V_2)) )\leq  \ve(N,\kappa)\E_0( \mathbf{1}_{B_N(\kappa)} \exp(N F(V_1,V_2)) ). 
\end{align}
By the Lipschitz property of the $\log \cosh$ function $|F(x,y) - F(x,z)| \leq 2\beta |y-z|$. Therefore,
\begin{align}
I_N \leq \ve(N,\kappa) \E_0( \mathbf{1}_{B_N(\kappa)} \exp( N F(V_1, V_2') + 2N \beta |V_2 - V_2'|) ). 
\end{align}
The upper bound in (\ref{eq:sandwich}) follows if, on the set $B_N(\kappa)$, $|V_2 - V_2'| \lesssim N^{-\kappa}$. To this end, note that on $B_N(\kappa)$, $\frac{1}{N} ||\textbf{\em{Y}} -\textbf{\em{Z}}||^2 \lesssim N^{-\kappa}$. Further, on $B_N(\kappa)$,
\begin{align}
\frac{1}{N} |\textbf{\em{Z}}^{T} \Lambda \textbf{\em{Z}} - \textbf{\em{Y}}^{T} \Lambda \textbf{\em{Y}} | &\leq \frac{2}{N} \|\Lambda\|_\infty  \|\textbf{\em{Z}}\| \|\textbf{\em{Z}} - \textbf{\em{Y}} \| \lesssim N^{-\kappa},
\end{align}
since $\|\Lambda\|_\infty$ is finite by Hypothesis \ref{hyp:exp_dist}(b).

From this it is easy to see that on the set $B_N(\kappa)$, $|V_2 - V_2'| \lesssim N^{-\kappa}$, and the proof is complete.
\end{proof}

\begin{lemma}
\label{lemma:variational_problem}
Under the assumptions of Theorem \ref{main}, for $\beta\geq  0$ sufficiently small,
\begin{align}
\lim_{ N \to \infty} \frac{1}{2N} \log \E_0 \exp( N F(V_1, V_2')) = I_\mu(\beta). 
\end{align}
\end{lemma}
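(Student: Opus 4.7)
The plan is to decouple $V_1$ and $V_2'$ in the exponent $NF(V_1,V_2')$ by expanding the $\cosh^N$ factor via the binomial theorem, reducing $\E_0\exp(NF(V_1,V_2'))$ to a sum of products of rescaled spherical integrals whose logarithmic rate is supplied by Theorem~\ref{thm:annealed}. Laplace's method then turns the problem into a one-dimensional variational problem, which I will solve using condition~(c) of Theorem~\ref{main}.

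Using $\cosh^N(u)=2^{-N}\sum_{k=0}^{N}\binom{N}{k}e^{(2k-N)u}$ with $u=\beta(V_1-V_2')$ and rearranging gives
\[
\exp(NF(V_1,V_2'))=\frac{1}{2^N}\sum_{k=0}^{N}\binom{N}{k}\exp(2\beta k V_1)\exp(2\beta(N-k)V_2'),
\]
so that, by the independence of $V_1$ and $V_2'$,
\[
\E_0\exp(NF(V_1,V_2'))=\frac{1}{2^N}\sum_{k=0}^{N}\binom{N}{k}\,\E_0[\exp(N\tilde\beta_k V_1)]\,\E_0[\exp(N\tilde\beta_{N-k}V_2')],
\]
with $\tilde\beta_k:=2\beta k/N\in[0,2\beta]$. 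Each factor is precisely a one-dimensional spherical integral; by Theorem~\ref{thm:annealed} (for $\beta$ so small that $2\beta$ is in the admissible range for $\mu$), its logarithmic rate is $I_\mu(\tilde\beta_k)$. Combining this with Stirling's estimate $\frac{1}{N}\log[2^{-N}\binom{N}{k}]\to -H(p)$ at $p=k/N$, where $H(p)=\log 2+p\log p+(1-p)\log(1-p)$, and applying Laplace's method to the finite sum yields
\[
\lim_{N\to\infty}\frac{1}{N}\log\E_0\exp(NF(V_1,V_2'))=\sup_{p\in[0,1]}G(p),\qquad G(p):=-H(p)+I_\mu(2\beta p)+I_\mu(2\beta(1-p)).
\]

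It then remains to show $\sup_p G(p)=G(1/2)=2I_\mu(\beta)$. Using $I'_\mu(\beta)=R_\mu(2\beta)$, one computes
\begin{align*}
G'(p) &= \log\tfrac{1-p}{p}+2\beta\bigl[R_\mu(4\beta p)-R_\mu(4\beta(1-p))\bigr], \\
G''(p) &= -\tfrac{1}{p(1-p)}+8\beta^2\bigl[R'_\mu(4\beta p)+R'_\mu(4\beta(1-p))\bigr],
\end{align*}
so $G'(1/2)=0$ and $G(1/2)=2I_\mu(\beta)$. Any critical point $p^*$ satisfies $|\log\tfrac{1-p^*}{p^*}|\leq 2\beta(x_{\max}-x_{\min})$, since $R_\mu$ is $(x_{\min},x_{\max})$-valued; the identity $2/(1+e^{\pm 2x})=1\mp\tanh x$ rewrites this as $4\beta p^*,\,4\beta(1-p^*)\in[U_L,U_R]$. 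Because $\tfrac{1}{p(1-p)}\geq 4$, condition~(c) of Theorem~\ref{main} forces $G''(p)<0$ throughout the range $\{p:4\beta p\in[U_L,U_R]\}$, so $G$ is strictly concave at every critical point; since between two local maxima of a smooth one-dimensional function there must be a local minimum, $p=1/2$ is the unique critical point and hence the global maximum.

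\textbf{Main obstacle.} The two delicate points are (i) upgrading Theorem~\ref{thm:annealed} to \emph{uniform} convergence of $\frac{1}{N}\log\E_0\exp(N\tilde\beta V_1)\to I_\mu(\tilde\beta)$ over $\tilde\beta\in[0,2\beta]$, which is needed to execute Laplace's method on the binomial sum; this should be extractable from the proof in~\cite{guionnet_maida}, or obtained via an $\varepsilon$-net sandwich exploiting the continuity of $I_\mu$; and (ii) upgrading local concavity of $G$ at $p=1/2$ to a global assertion, where the seemingly ad~hoc hypothesis~(c) of Theorem~\ref{main} is tuned precisely so that $G''<0$ on the interval that is \emph{a~priori} guaranteed to contain every critical point of $G$.
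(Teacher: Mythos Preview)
Your approach is correct and genuinely different from the paper's. The paper invokes the full large-deviation rate function $T_\mu$ for $V_1$ (Guionnet--Maida, Proposition~5.1), applies Varadhan's lemma to obtain the two-dimensional variational problem
\[
\sup_{x,y}\bigl\{F(x,y)-T_\mu(x)-T_\mu(y)\bigr\},
\]
and then locates the optimizer by writing the first-order conditions as a fixed-point system $x=R_\mu(2\beta(1+\tanh\beta(x-y)))$, $y=R_\mu(2\beta(1-\tanh\beta(x-y)))$, which it solves via the Banach contraction principle (this is where condition~(c) enters). Your binomial decoupling instead bypasses $T_\mu$ entirely, using only the spherical-integral limit $I_\mu$, and collapses the problem to the one-dimensional optimization $\sup_p G(p)$; the two problems are Legendre duals of one another (your variable $p$ corresponds to the paper's via $2p-1=\tanh\beta(x-y)$), and condition~(c) plays the same role in both---controlling $R_\mu'$ on exactly the interval $[U_L,U_R]$ that a priori contains all critical points. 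Your route is arguably more elementary, and it makes the origin of the interval $[U_L,U_R]$ in~\eqref{interval} quite transparent.

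On your two flagged obstacles: (i) is not a real difficulty, since $\tilde\beta\mapsto\frac{1}{N}\log\E_0 e^{N\tilde\beta V_1}$ is convex for each $N$, and pointwise convergence of convex functions to a finite convex limit is automatically uniform on compact subsets of the interior; this is enough to run Laplace on the binomial sum. For (ii), your concavity-at-every-critical-point argument is sound; you should also record that $G'(0^+)=+\infty$ and $G'(1^-)=-\infty$ (from the $\log\frac{1-p}{p}$ term), so the supremum over $[0,1]$ is attained in the interior and hence at $p=1/2$.
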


The proof of the above lemma is given below in Section \ref{variational_problem}. 
Note that the Lemma \ref{lemma:variational_problem} together with (\ref{eq:lowerb}) and (\ref{upperb}) gives 
\begin{eqnarray}
\lim_{N \to \infty} \frac{1}{2N} \log \E_0(Z_N(\beta, O ,\E(D))^2)=\lim_{N \to \infty} \frac{1}{N} \log \E_0(Z_N(\beta, O ,\E(D)))=I_\mu(\beta),
\end{eqnarray}
where the last equality uses Theorem \ref{thm:annealed}. This completes the proof of Proposition \ref{second_moment}. \\

\noindent{\it\textbf{Proof of Lemma \ref{lemma:variational_problem}:}} 
\label{variational_problem}
The proof of this lemma follows from a large deviation result established in \cite{guionnet_maida}. Recall the Hilbert transform and the $R$-transform of a probability measure $\nu$ defined in \eqref{hilbert_transform}, and \eqref{R_transform}, respectively. Denote the inverse of $H_{\nu}$ by $K_{\nu}$, and that of $R_{\nu}$ by $Q_{\nu}$. Refer to \cite{guionnet_maida} for further details about the Hilbert and the $R$-transforms.

Also, recall \eqref{eq:hmax} 
\begin{align}
x_{\max} = \lambda_{\max} - \frac{1}{ H_{\max}} \,\,\, \textrm{and} \,\,\, x_{\min} = \lambda_{\min} - \frac{1}{H_{\min}}, 
\label{xmaxmin}
\end{align}
where $H_{\max} = \lim_{ z \downarrow \lambda_{\max}} H_{\nu}(z)$ \textrm{and} $H_{\min} = \lim_{z \uparrow \lambda_{\min}} H_{\nu}(z)$.  Finally, for $\kappa \in (\lambda_{\min} , \lambda_{\max})^c$, define
\begin{align}
h_x{(\kappa)} = \int \log \frac {\kappa - \lambda}{\kappa - x} \de \nu(\lambda), 
\label{hx}
\end{align}
and $h_x^{\min} = \lim_{\kappa \uparrow \lambda_{\min}} h_x(\kappa)$ and $h_x^{\max} = \lim_{\kappa \downarrow \lambda_{\max}} h_x(\kappa)$. 

The following proposition, proved in \cite{guionnet_maida}, gives the large deviations rate function for the random variable $V_1$. 

\begin{propo}(\cite[Proposition 5.1]{guionnet_maida})
\label{prop:ldp}
If the sequence of non-random empirical measures $\mu_N(\Lambda)=\frac{1}{N}\sum_{i=1}^N \delta_{\lambda_i}\to \mu$ and satisfies Hypothesis \ref{hyp:exp_dist}, then the law of the random variables $V_1$ defined in (\ref{V1V2}) satisfies a large deviation principle with scale $N$ and good rate function 
\begin{align}\label{Tcases}
T_\mu(x) = 
\begin{cases}
\frac{1}{2} h_x(K_{\mu}(Q_{\mu}(x))) & \textrm{if   } \, x \in [x_{\min} , x_{\max}], \\
\frac{1}{2} h_x^{\max} & \textrm{if   } \, x \in ] x_{\max} , \lambda_{\max} [, \\
\frac{1}{2} h_x^{\min} & \textrm{if   } \, x \in ]\lambda_{\min} , x_{\min} [, \\
\infty & \textrm{  otherwise}. 
\end{cases} 
\end{align}
\end{propo}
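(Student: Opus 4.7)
My plan is to reduce to a two-dimensional Gärtner--Ellis argument using the Gaussian representation $V_1 \ed \sum_{i=1}^N \lambda_i X_i^2 / \sum_{i=1}^N X_i^2$ from (\ref{V1V2}) with $X_i$ i.i.d.\ standard normals, and then to contract along the ratio map to obtain the LDP for $V_1$. This route bypasses the spherical integral asymptotics and instead exploits the tractable chi-squared moment generating function directly.

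First I would establish a joint LDP for the pair $(A_N, B_N) := \bigl(N^{-1}\sum_i \lambda_i X_i^2,\, N^{-1}\sum_i X_i^2\bigr)$ at scale $N$. Its joint cumulant generating function is explicit,
$$\frac{1}{N}\log \E \exp\bigl(N(sA_N + tB_N)\bigr) \;=\; -\frac{1}{2N}\sum_{i=1}^N \log\bigl(1 - 2(s\lambda_i + t)\bigr),$$
valid on the open set where $2(s\lambda_i + t) < 1$ for every $i$. Under Hypothesis \ref{hyp:exp_dist} this converges to $\Lambda(s,t) = -\tfrac{1}{2}\int \log(1 - 2(s\lambda + t))\,d\mu(\lambda)$, a convex function that is essentially smooth on its effective domain (the edge convergence in Hypothesis \ref{hyp:exp_dist}(b) supplies the boundary steepness needed). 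Gärtner--Ellis then yields the joint LDP for $(A_N,B_N)$ with good rate function $J = \Lambda^\star$.

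Second, since $V_1 = A_N/B_N$ with $B_N \to 1$ almost surely, I would apply the contraction principle along the continuous map $(a,b)\mapsto a/b$ on $\{b>0\}$ (exponential negligibility of $\{B_N \leq \varepsilon\}$ is immediate from the joint LDP). This yields an LDP for $V_1$ with rate function $T_\mu(v) = \inf\{J(vb,b) : b>0\}$. Carrying out this inner minimization by Lagrange multipliers, the constraint $a = vb$ forces the dual relation $t^\star = -v s^\star$, and substitution gives
$$T_\mu(v) = -\Lambda(s^\star,-vs^\star) = \tfrac{1}{2}\int \log\bigl(1 - 2s^\star(\lambda - v)\bigr)\,d\mu(\lambda),$$
while the first-order condition on $s^\star$, after the substitution $\kappa := v + 1/(2s^\star)$, reduces to the single equation $H_\mu(\kappa) = 2s^\star$. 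Combining the two gives $\kappa = K_\mu(Q_\mu(v))$ with $v = R_\mu(2s^\star)$, and using $1 - 2s^\star(\lambda - v) = 2s^\star(\kappa - \lambda)$ together with $2s^\star = 1/(\kappa - v)$ rewrites the integral above as $\tfrac{1}{2}\int \log\frac{\kappa-\lambda}{\kappa-v}\,d\mu(\lambda) = \tfrac{1}{2} h_v(\kappa)$, matching the first branch of (\ref{Tcases}).

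Finally, I would handle the boundary cases. When $v \in (x_{\max},\lambda_{\max})$, no interior critical $s^\star$ exists because $v$ lies outside the bijective range of $R_\mu$; the infimum in the contraction is then attained in the limit $2s^\star \uparrow H_{\max}$, equivalently $\kappa \downarrow \lambda_{\max}$, producing $\tfrac{1}{2} h_v^{\max}$, and the left edge is symmetric. Outside $[\lambda_{\min},\lambda_{\max}]$ one has $V_1 \in [\lambda_{\min}(\Lambda),\lambda_{\max}(\Lambda)]$ almost surely, and since these endpoints converge to $[\lambda_{\min},\lambda_{\max}]$ by Hypothesis \ref{hyp:exp_dist}(b), exponential confinement forces $T_\mu \equiv +\infty$. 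I expect the main obstacle to be the analysis in $(x_{\max},\lambda_{\max})$: the variational problem loses its interior critical point exactly at $v = x_{\max}$, and one must verify that the infimum of $J(vb,b)$ is correctly identified with the limit as $s^\star$ saturates the boundary of the effective domain of $\Lambda$. Quantifying this saturation — and showing that the limiting formula coincides with $\tfrac{1}{2} h_v^{\max}$ rather than something larger — requires careful monotonicity and dominated convergence arguments that rely crucially on Hypothesis \ref{hyp:exp_dist}(b).
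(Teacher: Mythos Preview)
The paper does not prove this proposition; it is quoted verbatim from Guionnet--Maida \cite[Proposition~5.1]{guionnet_maida} and used as a black box. Your G\"artner--Ellis/contraction route is a natural alternative, and the algebra you carry out for the interior branch $v\in(x_{\min},x_{\max})$ --- the substitution $\kappa=v+1/(2s^\star)$ leading to $H_\mu(\kappa)=2s^\star$ and hence $\kappa=K_\mu(Q_\mu(v))$ --- is correct and clean.

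The genuine gap is your claim that $\Lambda(s,t)=-\tfrac{1}{2}\int\log(1-2(s\lambda+t))\,d\mu(\lambda)$ is essentially smooth. Steepness of $\Lambda$ at the boundary of its effective domain is \emph{not} supplied by Hypothesis~\ref{hyp:exp_dist}(b); that hypothesis concerns the convergence $\lambda_{\max}(\Lambda)\to\lambda_{\max}$, whereas steepness is governed by how $\mu$ itself behaves near its edges. Concretely, $\partial_t\Lambda=\int(1-2(s\lambda+t))^{-1}\,d\mu(\lambda)$ stays bounded as $2(s\lambda_{\max}+t)\uparrow 1$ whenever $H_{\max}=\int(\lambda_{\max}-\lambda)^{-1}\,d\mu(\lambda)<\infty$, as for the semicircle law where the square-root edge kills the singularity. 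But $H_{\max}<\infty$ is precisely the condition under which $x_{\max}<\lambda_{\max}$, so your steepness claim fails in \emph{every} case where the second and third branches of \eqref{Tcases} are actually present.

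This is not merely a difficulty in locating the boundary infimum. Without essential smoothness, G\"artner--Ellis gives the full upper bound with rate $\Lambda^\star$ but the lower bound only at exposed points of $\Lambda^\star$; the points $(vb,b)$ with $v\in(x_{\max},\lambda_{\max})$ are exactly the non-exposed ones, so contraction does not deliver the LDP lower bound for $V_1$ there. Computing $\inf_{b>0}J(vb,b)=\tfrac{1}{2}h_v^{\max}$ by letting $2s^\star\uparrow H_{\max}$, which you flag as the main obstacle, is in fact the easy half; what is missing is an independent argument that $V_1$ actually achieves this rate --- typically an explicit exponential tilt concentrating mass near $\lambda_{\max}$, of the sort Guionnet--Maida construct directly, rather than the black-box G\"artner--Ellis you invoke.
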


Since the  empirical measures $\mu_N(\Lambda)$ satisfies Hypothesis \ref{hyp:exp_dist}, by Varadhan's lemma \cite{dembo_zeitouni} we get 
\begin{align}
\lim_{N \to \infty} \frac{1}{2N} \log \E_0\exp( N F(V_1 ,V_2')) = \frac{1}{2}\sup_{x, y\in \R} (F(x,y) - T_{\mu}(x) -T _{\mu}(y)), 
\label{2momentvar}
\end{align}
where $T_{\mu}(\cdot)$ is the good rate function of $V_1$ and $F(x, y)=\beta(x+y)+\log\cosh\beta(x-y)$. 

Set $\psi(x,y) = F(x,y) - T_{\mu}(x) - T_{\mu}(y)$. To prove Lemma~\ref{lemma:variational_problem} it suffices to establish that for $\beta$ sufficiently small, 
\begin{align}\label{imu}
\sup_{x, y\in \R} \psi(x,y) = \sup_{(x,y) \in [x_{\min}, x_{\max}]^2} \psi(x,y) = 2 I_{\mu}(\beta),
\end{align}
where $x_{\max}$ and $x_{\min}$ are as defined in~\eqref{xmaxmin}.

To this end note that, 
\begin{align}
\label{eq:first_order1}
\frac{\partial \psi}{\partial x} &= \beta + \beta \tanh \beta(x-y) - T_{\mu}'(x) , \\
\frac{\partial \psi}{\partial y} &= \beta - \beta \tanh \beta(x-y) - T_{\mu}'(y). \label{eq:first_order2}
\end{align}
We begin by showing that the maxima of $\psi$ on the set $[\lambda_{\min}, \lambda_{\max}]^2 \backslash [x_{\min}, x_{\max}]^2$ is attained on the boundary of the set $[x_{\min}, x_{\max}]^2$, for $\beta < H_{\max}/4$: For instance, for $(x,y) \in ( x_{\max} , \lambda_{\max}] \times [x_{\min}, x_{\max}]$ (as in Figure~\ref{fig:range}), by~\eqref{Tcases} $T_{\mu}'(x)=\frac{1}{2}\frac{\de}{\de x}h_x^{\max} = \frac{1}{2(\lambda_{\max} - x)}$. Then for $\beta < H_{\max}/4$, using $\tanh \beta (x- y)\leq 1$ and $x\geq x_{\max}$,
\begin{align}\label{neg}
\frac{\partial \psi }{\partial x}(x,y) = \beta (1 + \tanh \beta ( x- y) ) - \frac{1}{2(\lambda_{\max}- x)} <0. 
\end{align} 
Therefore, $\psi(x, y)\leq \psi(x_{\max}, y)$ for $(x,y) \in ( x_{\max} , \lambda_{\max}] \times [x_{\min}, x_{\max}]$. Other points in the region $[\lambda_{\min}, \lambda_{\max}]^2 \backslash [x_{\min}, x_{\max}]^2$ can be dealt with similarly.

\begin{figure*}[h]
\centering
\begin{minipage}[c]{1.0\textwidth}
\centering
\includegraphics[width=2.5in]
    {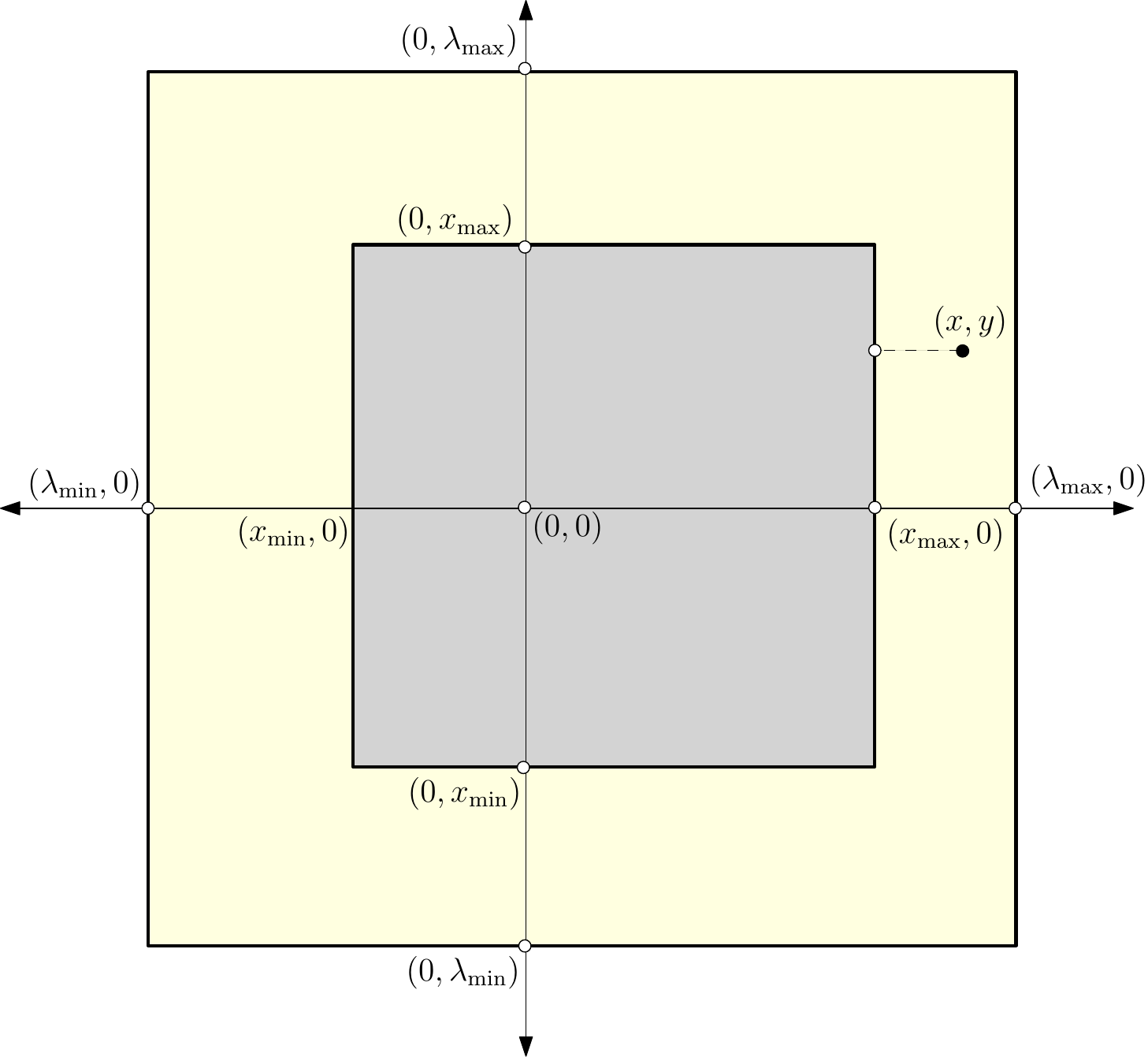}\\
\end{minipage}
\caption{\small{The domain of the function $\psi$.}}
\label{fig:range}
\end{figure*}

Next, we establish that for $\beta$ small enough, the maxima of $\psi$ on $[x_{\min}, x_{\max}]^2$ cannot be attained on the boundary of $[x_{\min}, x_{\max}]^2$: For $y \in (x_{\min}, x_{\max})$, 
$$\frac{\partial}{\partial x}\psi(x_{\max}- , y)=\beta(1+\tanh \beta(x_{\max}-y))-\frac{1}{2}H_{\max} < 0.$$
if $\beta < 4 H_{\max}$ as in~\eqref{neg}. This ensures that the maxima cannot be attained for $x= x_{\max}$. The same analysis implies that the maxima is not attained for $y = x_{\max}$. This establishes the required assertion.

It remains to analyze the function $\psi$ on $(x_{\min}, x_{\max})^2$. Note that for $x\in (x_{\min}, x_{\max})$, $T'_\mu(x)=\frac{1}{2}Q_{\mu}(x)$. Therefore, from~\eqref{eq:first_order1}-\eqref{eq:first_order2} any stationary point of $\psi$ in $(x_{\min}, x_{\max})^2$ is a solution of the system of equations
\begin{align}
x &= R_{\mu}(2 \beta ( 1 + \tanh \beta(x-y))) \label{eq:fixedpoint1},\\
y &= R_{\mu}(2 \beta (1 - \tanh \beta(x-y))), \label{eq:fixedpoint2}
\end{align}
since $R_\mu$ is the inverse of $Q_\mu$.

Let $a(\beta)$ be the solution of $\beta = T_{\mu}' (\cdot)=\frac{1}{2}Q_{\mu}(\cdot)$. 
It is easy to verify that $x^*(\beta)=y^*(\beta)=a(\beta)$ is a critical point of $\psi$ in $(x_{\min}, x_{\max})^2$.  It remains to show that for $\beta$ sufficiently small the maximum in (\ref{2momentvar}) is attained at $x^*(\beta)=y^*(\beta)=a(\beta)$. This implies Lemma~\ref{lemma:variational_problem}, since by \cite[Lemma 5.7]{guionnet_maida}, $\{ \beta a(\beta)-T_{\mu}(a(\beta))\}=I_{\mu}(\beta)$.

To show that the maximum in \eqref{2momentvar} is attained at $x^*(\beta) = y^*(\beta) = a(\beta)$ we will show (in Lemma~\ref{lemma:fixedpt} below) that for $\beta$ sufficiently small, the system of equations \eqref{eq:fixedpoint1}-\eqref{eq:fixedpoint2} has a unique solution $(a(\beta), a(\beta))$ which is a local maxima. This establishes that $(a(\beta), a(\beta))$ maximizes $\psi$ in $(x_{\min}, x_{\max})^2$ and~\eqref{imu} follows.

\begin{lemma}
\label{lemma:fixedpt}
For $\beta$ sufficiently small and $R_{\mu}$ satisfying condition (c) in Theorem \ref{main}, the system of equations \eqref{eq:fixedpoint1}-\eqref{eq:fixedpoint2} has a unique solution which is a local maximum. 
\end{lemma}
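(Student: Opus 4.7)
The strategy has three steps: (i) verify that $(a(\beta),a(\beta))$ solves the system, (ii) use the bound in condition (c) to turn the system into a contraction, forcing uniqueness, and (iii) compute the Hessian at $(a(\beta),a(\beta))$ and check negative-definiteness, again via condition (c).

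For step (i), setting $x=y$ in \eqref{eq:fixedpoint1}--\eqref{eq:fixedpoint2} makes $\tanh\beta(x-y)$ vanish, so both equations collapse to $x=R_\mu(2\beta)=a(\beta)$. Hence $(a(\beta),a(\beta))$ is always a solution.

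For step (ii), let $(x,y)$ be any solution in $(x_{\min},x_{\max})^2$, and put $u=x-y$. Subtracting \eqref{eq:fixedpoint2} from \eqref{eq:fixedpoint1} gives
\begin{align*}
u \;=\; R_\mu\bigl(2\beta(1+\tanh\beta u)\bigr)-R_\mu\bigl(2\beta(1-\tanh\beta u)\bigr).
\end{align*}
Since $|u|\leq x_{\max}-x_{\min}$, we have $|\tanh\beta u|\leq\tanh\beta(x_{\max}-x_{\min})$, so both arguments of $R_\mu$ lie in $[U_L,U_R]$, as defined in \eqref{interval}. By the mean value theorem and the elementary bound $|\tanh\beta u|\leq\beta|u|$,
\begin{align*}
|u| \;\leq\; 4\beta\,|\tanh\beta u|\sup_{\beta_0\in[U_L,U_R]}|R'_\mu(\beta_0)| \;\leq\; 4\beta^2\,|u|\sup_{\beta_0\in[U_L,U_R]}|R'_\mu(\beta_0)|.
\end{align*}
By condition (c) of Theorem \ref{main}, the factor $4\beta^2\sup_{\beta_0\in[U_L,U_R]}|R'_\mu(\beta_0)|$ is strictly less than $1$ for $\beta$ sufficiently small, which forces $u=0$. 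Plugging $x=y$ back into either equation recovers $x=y=R_\mu(2\beta)=a(\beta)$.

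For step (iii), I would differentiate \eqref{eq:first_order1}--\eqref{eq:first_order2} once more and evaluate at $(a(\beta),a(\beta))$. Since $\mathrm{sech}^2(0)=1$, this yields
\begin{align*}
\nabla^2\psi(a(\beta),a(\beta)) \;=\; \beta^2\begin{pmatrix}1 & -1 \\ -1 & 1\end{pmatrix} - T''_\mu(a(\beta))\,I_2.
\end{align*}
On $(x_{\min},x_{\max})$ one has $T'_\mu(x)=\tfrac{1}{2}Q_\mu(x)$, and since $Q_\mu=R_\mu^{-1}$, it follows that $T''_\mu(a(\beta))=\tfrac{1}{2R'_\mu(2\beta)}$. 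The eigenvalues of the Hessian are therefore $-T''_\mu(a(\beta))$ and $2\beta^2-T''_\mu(a(\beta))$; negative definiteness reduces to $4\beta^2 R'_\mu(2\beta)<1$, which is a special case of condition (c) (note $2\beta\in[U_L,U_R]$), together with $R'_\mu(2\beta)>0$, which holds for small $\beta$ since $R'_\mu(0)=\mathrm{Var}(\mu)>0$ by the standard series expansion of the $R$-transform.

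The main obstacle is ensuring in step (ii) that the mean-value-theorem intermediate points stay within $[U_L,U_R]$ so that condition (c) applies uniformly; once this containment is secured, both uniqueness and the Hessian sign are immediate consequences of the same quantitative bound. Combining uniqueness with the earlier observation that $\psi$ does not attain its maximum on the boundary of $[x_{\min},x_{\max}]^2$ shows that $(a(\beta),a(\beta))$ is the global maximizer, which is what subsequent steps of the proof of Lemma \ref{lemma:variational_problem} require.
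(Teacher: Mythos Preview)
Your proof is correct and close in spirit to the paper's, but the uniqueness step is organized differently. The paper defines the two-dimensional map
\[
G(x,y)=\bigl(R_\mu(2\beta(1+\tanh\beta(x-y))),\,R_\mu(2\beta(1-\tanh\beta(x-y)))\bigr)
\]
on $[x_{\min},x_{\max}]^2$, shows it is a contraction in the $\ell_1$ metric via the same mean-value estimate and condition (c), and then invokes the Banach Fixed Point Theorem. You instead subtract the two equations and run the contraction argument on the scalar $u=x-y$ directly, which is more elementary and avoids checking that $G$ maps the square to itself. Your Hessian computation in step (iii) matches the paper's; the paper states only the condition $\beta^2 R'_\mu(2\beta)<1/4$, leaving the positivity of $T''_\mu(a(\beta))$ implicit, whereas you make it explicit via $R'_\mu(0)=\mathrm{Var}(\mu)>0$.
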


\begin{proof}
For $(x, y)\in [x_{\min}, x_{\max}]^2$, define 
\begin{align}
G(x,y) = 
\left( \begin{array}{c}
R_{\mu}(2\beta ( 1 + \tanh \beta(x- y))) \\
R_{\mu}( 2 \beta ( 1 - \tanh \beta(x-y)))
\end{array} \right). 
\end{align}
For $\beta < H_{\max}/ 4$, $G$ maps $[x_{\min}, x_{\max}]^2$ to $[x_{\min}, x_{\max}]^2$. Consider the set $M = [x_{\min}, x_{\max}]^2$ equipped with the $L_1$ metric, that is, $d_1 (( x_1, y_1), (x_2, y_2)) = | x_1 - x_2| + | y_1 - y_2|$. We note that $(M,d_1)$ is a complete metric space. Now, by the mean value theorem and using $\sech^2 \leq 1$, 
\begin{align*}
| R_{\mu} ( 2 \beta ( 1 + \tanh \beta(x_1 - y_1) )) - R_{\mu}( 2 \beta ( 1 + \tanh \beta ( x_2 - y_2))) | \leq 2 \beta^2 \zeta(\beta) d_1((x_1, y_1), (x_2, y_2)). 
\end{align*}
where $\zeta(\beta)=\beta^2 \sup_{\beta_0 \in [ U_L, U_R] } | R'(\beta_0)|$ and $U_L$ and $U_R$ are as in~\eqref{interval}. This implies, by condition (c) of Theorem \ref{main}, that $G$ is a contraction. Thus, using the  Banach Fixed Point Theorem \cite{simmons}, $G$ has a unique fixed point. This implies $(a(\beta), a(\beta))$ is the unique fixed point of $G$, and the only  stationary point of $\psi$ in $(x_{\min}, x_{\max})^2$.

Finally, we establish that the critical point $(a(\beta), a(\beta))$ is a local maxima of the function $G$. To see this note that 
the Hessian 
\begin{align}
\grad^2 \psi(x,y) = -\diag(T_{\mu}''(x) , T_{\mu}''(y)) + \beta^2 \sech^2 \beta(x-y) 
\left(\begin{matrix}
1 & -1 \\
-1 & 1
\end{matrix} \right).
\label{2derivative} 
\end{align}
Using $T_{\mu}''(x) = \frac{1}{2 R_{\mu}'(Q(x))}$, we note that $\grad^2 \psi(a(\beta) , a(\beta))$ is negative definite provided
$\beta^2 R'(2\beta) \leq 1/4$. This condition is satisfied for $\beta$ sufficiently small as condition (c) holds, thus completing the proof. 
\end{proof}



\begin{remark}
\label{rem:sk}
In the SK model, $R_\rho(z)=z$ (defining $R_{\rho}(0)=0$), $x_{\max}=1$, $x_{\min}=-1$, and $\rho$ is the semi-circle law (\ref{semicircle}). Moreover, 
\begin{align}
T_{\rho}(x) = \begin{cases}
\frac{1}{4} - \log (2+x) &  x \in [ -2, -1],\\
\frac{x^2}{4} & x \in [-1, 1], \\ 
 \frac{1}{4} - \log(2-x)& x \in [1,2]. 
\end{cases}
\end{align}
Note that $T_{\rho}(x) > x^2/4$ in $[-2,-1] \cup [1,2]$. Therefore, $\psi(x, y)\leq F(x, y)-\frac{x^2}{4}-\frac{y^2}{4}:=\tilde \psi(x, y)$ for $(x, y) \in [-2, 2]^2$. It is easy to see that $(2\beta, 2\beta)\in [-1, 1]^2$, for $\beta<1/2$, is a stationary point of $\tilde \psi$. The Hessian $\grad^2 \tilde \psi(x,y) $ is negative definitive for $(x, y)\in [-2, 2]^2$ if 
\begin{enumerate}[(a)]

\item  $(\grad^2 \tilde \psi(x,y))_{11}=\frac{1}{2}+\beta^2\sech^2\beta(x-y)< 0$, which holds whenever $\beta< \frac{1}{\sqrt 2}$; and 

\item $\det(\grad^2 \tilde \psi(x,y))=\frac{1}{4}+\beta^2\sech^2\beta(x-y)<0$, whenever $\beta< \frac{1}{2}$.
\end{enumerate}
Therefore, the maximum of $\tilde \psi$ is attained at $(2\beta, 2\beta)\in [-1, 1]^2$, for $\beta<1/2$, where the two functions $\tilde \psi$ and $\psi$ agree. Thus, for $\beta<1/2$, which is the entire replica symmetric phase,  the limit in Corollary \ref{sk} holds.
\end{remark}

\noindent \textbf{Acknowledgements} The authors thank Amir Dembo, Andrea Montanari and Sourav Chatterjee for helpful discussions. S.S. thanks Zhou Fan for help with results about random matrices.

%
\bibliographystyle{amsalpha}

\end{document}